\tikzset{snake it/.style={decorate, decoration={snake, amplitude=.4mm,segment length=4mm}}}
\newcommand{\Z}{\mathbb{Z}}
\newcommand{\mc}[1]{\mathcal{#1}}
\renewcommand{\P}{\mathbb{P}}
\newcommand{\R}{\mathbb{R}}
\newcommand{\N}{\mathbb{N}}
\newcommand{\sss}[1]{\scriptscriptstyle{#1}}
\newcommand{\blank}[1]{}
\newcommand{\bs}[1]{\boldsymbol{#1}}
\newcommand{\uE}{\underline{E}}
\newcommand{\oE}{\overline{E}}
\newcommand{\uF}{\underline{F}}
\newcommand{\oF}{\overline{F}}
\newcommand{\Pp}{\mathbb{P}_p}
\title{Phase transitions for degenerate random environments} 
\author{Mark Holmes \footnote{University of Melbourne} \& Thomas S. Salisbury\footnote{York University}}
\newcounter{thmcounter}
\newtheorem{theorem}[thmcounter]{Theorem}
\newcounter{excounter}
\newtheorem{example}[excounter]{Example}
\newcounter{condcounter}
\newtheorem{condition}[condcounter]{Condition}
\newcounter{opencounter}
\newtheorem{open}[opencounter]{Open problem}
\newcounter{lemcounter}
\newtheorem{lemma}[lemcounter]{Lemma}
\newcounter{corcounter}
\newtheorem{corollary}[corcounter]{Corollary}
\newcounter{defcounter}
\newtheorem{definition}[defcounter]{Definition}
\newcounter{propcounter}
\newtheorem{proposition}[propcounter]{Proposition}
\newcounter{rkcounter}
\newtheorem{remark}[rkcounter]{Remark}
\newtheorem*{theorem*}{Theorem}
\begin{document}
\maketitle

\begin{abstract}
We study a class of models of i.i.d.~random environments in general dimensions $d\ge 2$, where each site is equipped randomly with an environment, and a parameter $p$ governs the frequency of certain environments that can act as a barrier.  We show that many of these models (including some which are non-monotone in $p$) exhibit a sharp phase transition for the geometry of connected clusters as $p$ varies.
\end{abstract}

\noindent {\bf Keywords:} Random environment, phase transition, percolation.
\section{Introduction} 
Fix $d\ge 2$, and set $[d]=\{1,2,\dots, d\}$.   Let $\mc{E}_+=\{e_i\}_{i \in [d]}$ 
denote the set of canonical basis vectors for $\Z^d$ and let $\mc{E}_-=\{-e_i\}_{i \in [d]}$ and $\mc{E}=\mc{E}_+\cup \mc{E}_-$.  The \emph{orthant model} is the name we give to the random directed graph in which a vertex $x\in\mathbb{Z}^d$ either connects (with probability $p$) to all $x+e$, $e\in\mc{E}_+$, or (with probability $1-p$) to each $x+e$, $e\in\mc{E}_-$. Our motivation was to show two properties of the orthant model in all dimensions.
\begin{itemize}
\item That ``filling in'' the holes in the forward cluster of the origin $o$ yields a cluster bounded by sites of type $\mc{E}_+$ (and this filled-in region is in turn the forward cluster of a different model, which we will call the \emph{half-orthant model});
\item That this cluster undergoes a phase transition in $p$;
\end{itemize}
Both statements will be special cases of more general results (i.e.~for a broader class of models) that we describe below.

\subsection{Models and main results}

Let $\mu$ be a probability measure on the power set of $\mc{E}$.  Let $(\mc{G}_x)_{x \in \Z^d}$ be i.i.d.~with law $\mu$.  This induces a random directed graph on $\Z^d$ - insert arrows from $x$ to each of the vertices $\{x+e:e\in \mc{G}_x\}$.  We are interested in the set of vertices $\mc{C}_x\subset \Z^d$ that can be reached from $x$ by following arrows, as well as the sets $\mc{B}_x=\{y \in \Z^d:x \in \mc{C}_y\}$ and $\mc{M}_x=\mc{C}_x\cap \mc{B}_x$.  These models are examples of {\em degenerate random environments} -- see \cite{DRE, DRE2}. The study of these environments lays the foundation for understanding {\em random walks in non-elliptic random environments}.  See \cite{BSz, Hughes, Zeit04} for the uniformly elliptic theory and \cite{RWDRE,RWDRE2} (together with the references in the latter) for the non-elliptic case. In this context the arrows from $x$ represent the  possible steps that the walk can take from $x$.  Then the condition that $\mc{C}_x$ is infinite for every $x$ is precisely the condition which ensures that the random walker does not get stuck on a finite set of sites (see e.g.~\cite[Lemma 2.2]{RWDRE}).  This is the setting that interests us.  

To state our main results we introduce an explicit probability space (with a particular coupling structure) on which our models are defined.  Let $(\Omega, \mc{F}, \P)$ be a probability space on which $(U_x)_{x \in \Z^d}, (U'_x)_{x \in \Z^d}$ are i.i.d.~$U(0,1]$ random variables.  

Let $k,\ell\in \N$, $\bs{E}=(E_1,\dots, E_k)$, $\bs{F}=(F_1,\dots, F_\ell)$ with each $E_i,F_j \subset \mc{E}$.  Let $\mc{D}_k=\{(r_1, \dots, r_k): r_i\ge 0 \text{  for each $i$ and }\sum_{i=1}^k r_i=1\}$.  For $p\in [0,1]$, $\bs{r}\in \mc{D}_k$,  $\bs{q}\in \mc{D}_\ell$ and  $x \in \Z^d$, set 
\begin{equation}
\mc{G}_x=\begin{cases}
E_i, & \text{ if } U_x< p\text{ and }U'_x \in [\sum_{j=1}^{i-1} r_j,\sum_{j=1}^{i}r_j)\\
F_i, & \text{ if }U_x\ge p \text{ and }U'_x\in [\sum_{j=1}^{i-1} q_j,\sum_{j=1}^{i}q_j).
\end{cases}
\end{equation}
We denote the $\mc{C}_x$ for this model by $\mc{C}_x(\bs{E},\bs{F},\bs{r},\bs{q},p)$. If $k=\ell=1$ then $r_1=q_1=1$ and we say that the model is \emph{2-valued} and we write $\mc{C}_x(E_1,F_1,p)$ for the forward cluster. Or, if the sets $E_1$ and $F_1$ are understood, simply $\mc{C}_x(p)$.

Let $\uE=\cap_{i=1}^k E_i$ and $\oE=\cup_{i=1}^k E_i$, and similarly $\uF=\cap_{i=1}^\ell F_i$ and $\oF=\cup_{i=1}^\ell F_i$.   Let $\Omega_+=\{x:U_x< p\}$ (these are the sites that receive an $E$ environment), and $\Omega_-=\{x:U_x\ge p\}$.   
\begin{remark}
\label{rem:subsets}
If $E'_i\subset E_i$ for each $i\in [k]$ and $F'_i\subset F_i$ for each $i\in [\ell]$ then for each $\bs{r}$, $\bs{q}$, and $p$, we have $\mc{C}_o(\bs{E}',\bs{F}',\bs{r},\bs{q},p)\subset \mc{C}_o(\bs{E},\bs{F},\bs{r},\bs{q},p)$.  
\end{remark}
\begin{example}
\label{exa:orthant}
The case $k=\ell=1$, $E_1=\mc{E}_+$ and $F_1=\mc{E}_-$ is what we have referred to above as the {\em orthant model}.  The sets $\mc{C}_x(\mc{E}_+,\mc{E}_-,p)$ are non-monotone in $p$.  
\end{example}
When $d=2$, this model was studied in \cite{DRE} and \cite{DRE2}. The left side of Figure \ref{fig:2d_e1e2} shows an example.
We henceforth assume the following, which clearly holds for the orthant model. 
\begin{condition}
\label{cond1}
$d\ge 2$, $e_1\in \uE$, $\oE\subset \mc{E}_+$, and $\uF\supset \mc{E}\setminus \uE$.
\end{condition}
Remark \ref{rem:subsets} then implies (assuming Condition \ref{cond1}), that 
\begin{equation}
 \mc{C}_x(\uE,\mc{E}\setminus \uE, p)\subset \mc{C}_x(\bs{E},\bs{F},\bs{r},\bs{q},p)\subset  \mc{C}_x(\mc{E}_+,\mc{E}, p).\label{2-valued-bound}
\end{equation}

Observe that under Condition \ref{cond1}, from any site at least one of the arrows in direction $e_1$ or  $-e_2$ is available (the former is available if the local environment is $E_i$ for some $i$, while the latter is available otherwise), so every $\mc{C}_x$ contains an infinite self-avoiding path. 

\begin{example}
\label{exa:maximal}
The case $k=\ell=1$, $E_1=\mc{E}_+$ and $F_1=\mc{E}$ will be referred to as the \emph{half-orthant model}. It is the ``maximal'' model satisfying Condition \ref{cond1}. Since $E_1\subset F_1$,  $\mc{C}_o(p)$ is monotone decreasing in $p$ in this case.  Obviously $\mc{C}_o(0)=\Z^d$ and $\mc{C}_o(1)=(\Z_+)^d$.  It will turn out that there is a non-trivial phase transition for having $\mc{C}_o(p)=\Z^d$.
\end{example}
See the right side of Figure \ref{fig:2d_e1e2} for an illustration of this model, when $d=2$. Likewise we may compare the following two models, with Figure \ref{fig:2d_e1} showing part of a realisation of the environment and the cluster $\mc{C}_o(.5)$ for Examples \ref{exa:e_1a} and \ref{exa:e_1b} in two dimensions.

\begin{example}
\label{exa:e_1a}
If  $k=\ell=1$, $E_1=\{e_1\}$ and $F_1=\mc{E}\setminus \{e_1\}$ then clearly $\mc{C}_o(0)=\Z_-\times \Z^{d-1}$ and $\mc{C}_o(1)=\Z_+ \times \{0\}^{d-1}$ so the sets $\mc{C}_x(p)$ are non-monotone in $p$.  
\end{example}

\begin{example}
\label{exa:e_1b}
Take  $k=\ell=1$, $E_1=\{e_1\}$ and $F_1=\mc{E}$.  The sets $\mc{C}_x(p)$ are  monotone in $p$.  
\end{example}

\begin{figure}
\hspace{-1cm}
\includegraphics[scale=.4]{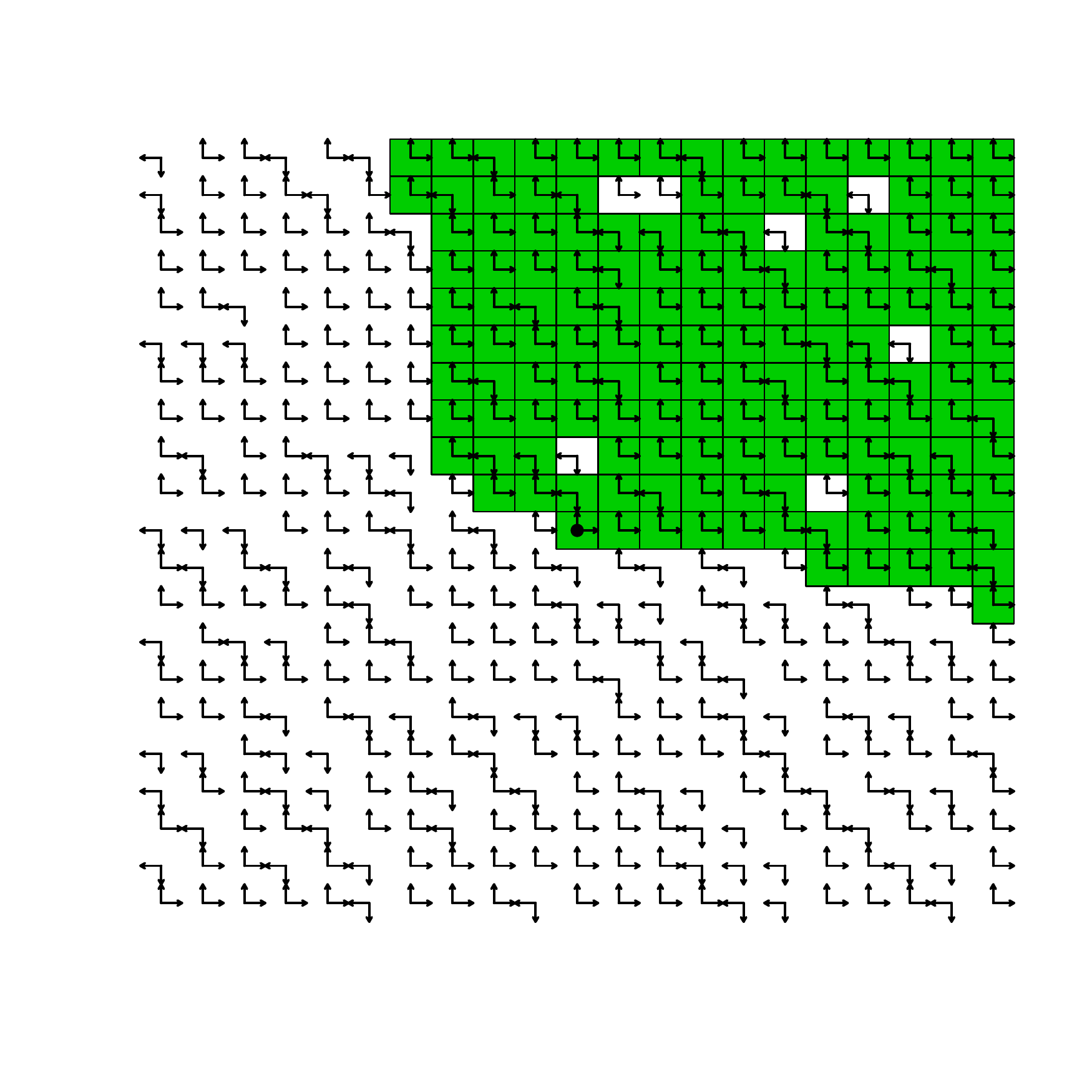}
\includegraphics[scale=.4]{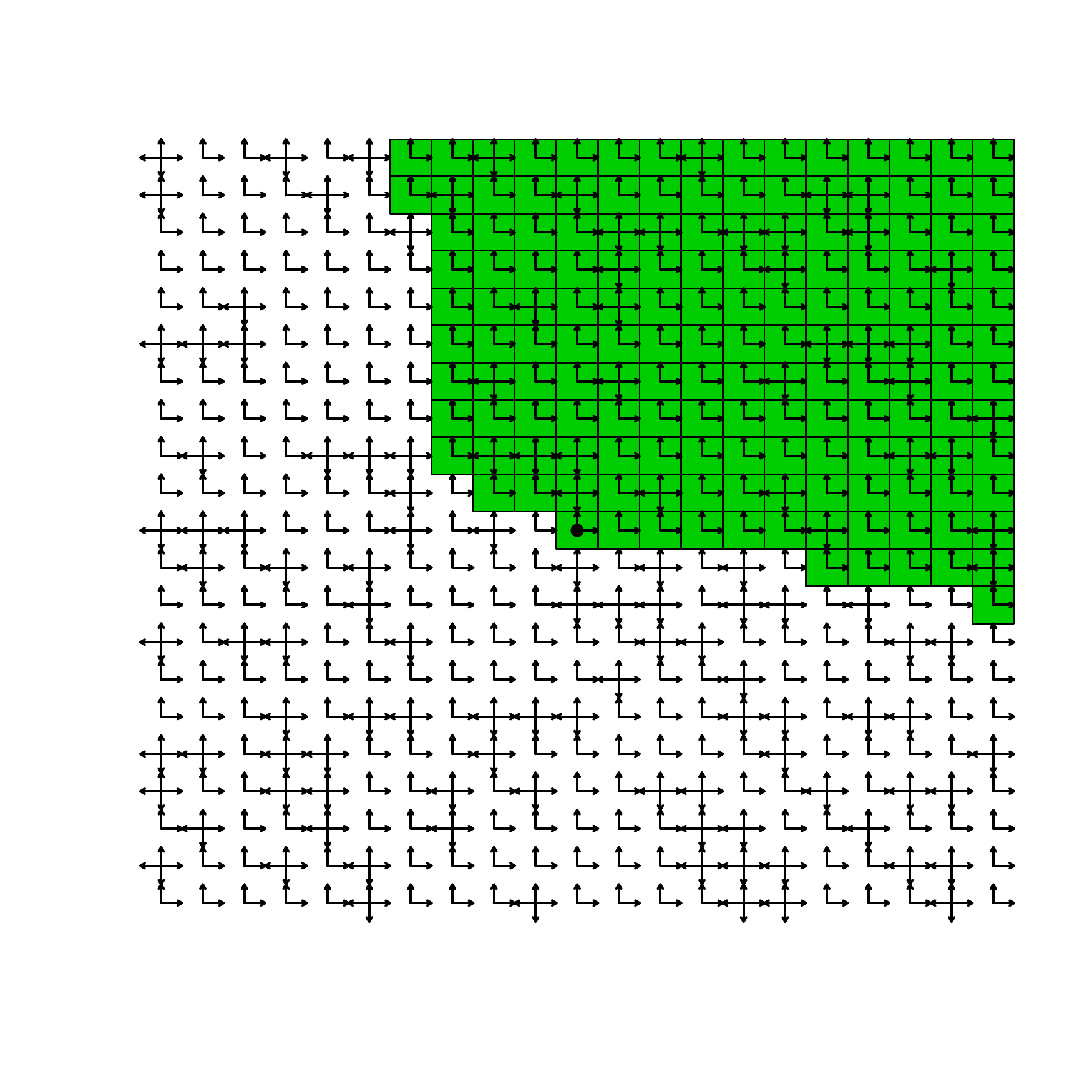}
\vspace{-1.5cm}
\caption{Realisations of finite parts of the set $\mc{C}_o$ for the orthant and half-orthant models (Examples \ref{exa:orthant} and \ref{exa:maximal}) with $p=0.7$ and $d=2$. So $k=\ell=1$ and on the left, $E_1=\{e_1,e_2\}$ and $F_1=\{-e_1,-e_2\}$, while $E_1=\{e_1,e_2\}$ and $F_1=\mc{E}(2)$ on the right. They are generated from the same $U$'s.  Note that the boundaries of the two shaded clusters are the same (see Theorem \ref{thm:main1}).}
\label{fig:2d_e1e2}
\end{figure}

\begin{figure}
\hspace{-1cm}
\includegraphics[scale=.4]{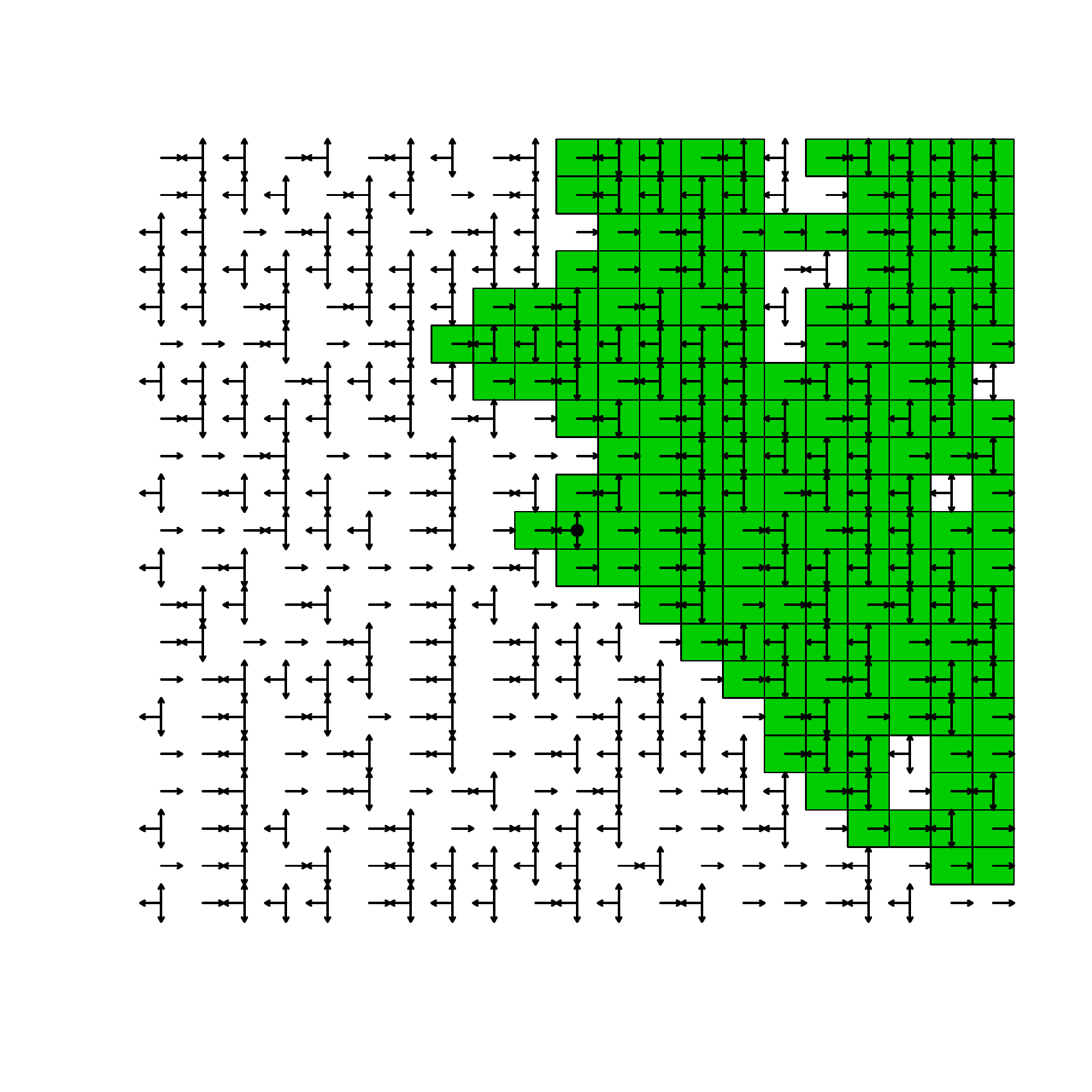}
\includegraphics[scale=.4]{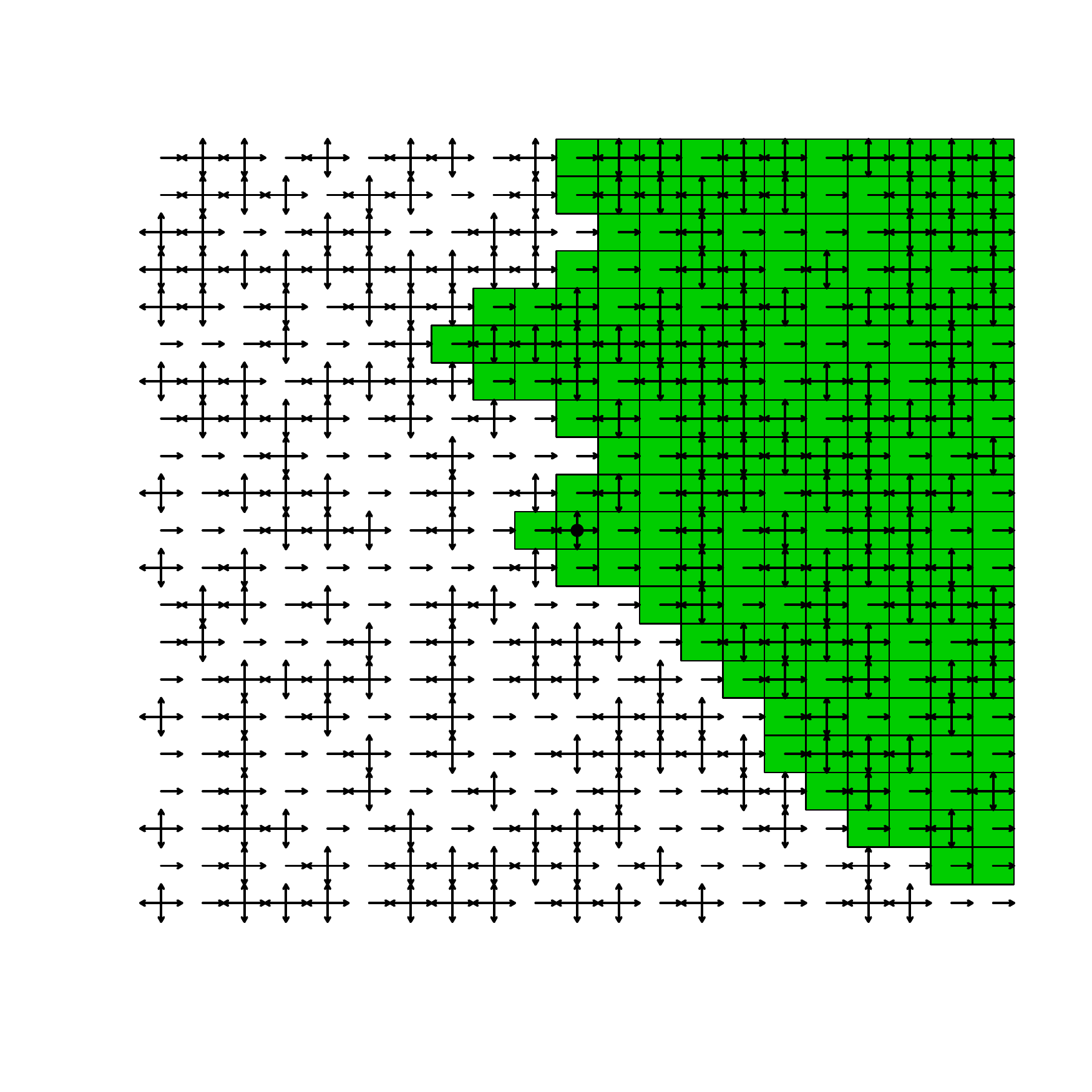}
\vspace{-1.5cm}
\caption{Realisations of finite parts of the set $\mc{C}_o(\frac{1}{2})$ for two models with $k=\ell=1$ and $d=2$. On the left we have $E_1=\{e_1\}$ and $F_1=\{-e_1,e_2,-e_2\}$ and on the right $E_1=\{e_1\}$ and $F_1=\mc{E}(2)$. They are generated from the same environment.  Note that the boundaries of the two shaded clusters are the same (see Theorem \ref{thm:main1}).}
\label{fig:2d_e1}
\end{figure}



For fixed $\bs{F}$, let $\bs{F}^*$ denote the corresponding object with $F_i$ replaced with $\mc{E}$ for each $i$.  Note that we obtain the same model if we take $\ell=1$ and $F_1=\mc{E}$, so we will write
$\mc{C}_x(\bs{E},\bs{r},p)$ for $\mc{C}_x(\bs{E},\bs{F}^*,\bs{r},\bs{q},p)$.
Then, by Remark \ref{rem:subsets},
\begin{equation}
\mc{C}_x(\bs{E},\bs{F},\bs{r},\bs{q},p)\subset \mc{C}_x(\bs{E},\bs{r},p).\label{banana1}
\end{equation}

For $x\in \Z^{d}$ let $L_x:=\inf\{k\in \Z: x+ke_1\in \mc{C}_o\}$.    It is immediate from \eqref{banana1} that (writing $L_x(\bs{E},\bs{r},p)$ for $L_x(\bs{E},\bs{F}^*,\bs{r},\bs{q},p)$)
\begin{equation}
L_x(\bs{E},\bs{F},\bs{r},\bs{q},p)\ge L_x(\bs{E},\bs{r},p).\label{banana2}
\end{equation}
\begin{remark}
\label{rem:L}
If $x=y+ke_1$ for some $k\in \Z$ then $x+je_1\in \mc{C}_o$ if and only if $y+(k+j)e_1\in \mc{C}_o$, so $L_y=L_x+k$.  It follows that for each $y\in \Z^d$, $L_{y+L_ye_1}=0$.
\end{remark}

For $z\in \Z^d$ we define $z_{\{1+\}}=\{z+ke_1:k\in \Z_+\}$, and for $A\subset \Z^d$
\begin{equation}
A_{\{1+\}}=\bigcup_{z \in A}z_{\{1+\}}.\label{A1+}
\end{equation}
Our first main result is the following.  See Figures  \ref{fig:2d_e1e2} and  \ref{fig:2d_e1} for 2-valued illustrations  when $d=2$. See Figure \ref{fig:3d_e_1} for a simulation of a 3-dimensional model. 
\begin{theorem}
\label{thm:main1}
Assume Condition \ref{cond1}.  Then for each  $x\in \Z^{d}$, and $p\in (0,1]$,
\[L_x(\bs{E},\bs{F},\bs{r},\bs{q},p)=L_x(\bs{E},\bs{r},p)\in [-\infty,\infty)\text{ a.s.}\]
and 
\[\mc{C}_o(\bs{E},\bs{F},\bs{r},\bs{q},p)_{\{1+\}}=\mc{C}_o(\bs{E},\bs{r},p)\text{ a.s.}\]
\end{theorem}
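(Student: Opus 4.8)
## Proof Proposal

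The plan is to establish the two assertions simultaneously by exploiting the coupling: all the models $\mc{C}_o(\bs{E},\bs{F},\bs{r},\bs{q},p)$ with the same $\bs{E}$, $\bs{r}$ (and varying $\bs{F}$) are built from the same $U_x$, $U'_x$, and by \eqref{banana1} the cluster for $(\bs{E},\bs{F},\bs{r},\bs{q},p)$ sits inside the cluster for $(\bs{E},\bs{r},p)$. Since the latter model has $F_1 = \mc{E}$, every site of type $\Omega_-$ (i.e.\ $U_x \ge p$) in that model transmits arrows in all $2d$ directions, in particular in the $\pm e_1$ directions. So the first step is to observe that in the model $\mc{C}_o(\bs{E},\bs{r},p)$, the set $\mc{C}_o$ is automatically invariant under adding $e_1$ at any $\Omega_-$ site it contains, and invariant under adding $e_1$ at any $\Omega_+$ site (since $e_1 \in \uE$ by Condition~\ref{cond1}). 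The only obstruction to full $e_1$-line invariance is moving in the $-e_1$ direction at $\Omega_+$ sites. This is exactly the content one wants: $\mc{C}_o(\bs{E},\bs{r},p)$ is ``already filled in going forward along $e_1$'', and the claim $\mc{C}_o(\bs{E},\bs{F},\bs{r},\bs{q},p)_{\{1+\}} = \mc{C}_o(\bs{E},\bs{r},p)$ says that the smaller cluster, once we fill it in along $e_1$-rays, recovers the big one.

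For the inclusion $\mc{C}_o(\bs{E},\bs{F},\bs{r},\bs{q},p)_{\{1+\}} \subset \mc{C}_o(\bs{E},\bs{r},p)$: by \eqref{banana1} we have $\mc{C}_o(\bs{E},\bs{F},\ldots) \subset \mc{C}_o(\bs{E},\bs{r},p)$, and by the observation above the right-hand side is closed under the operation $A \mapsto A_{\{1+\}}$, giving this direction. The real work is the reverse inclusion $\mc{C}_o(\bs{E},\bs{r},p) \subset \mc{C}_o(\bs{E},\bs{F},\bs{r},\bs{q},p)_{\{1+\}}$. Here I would argue by induction on (say) the length of a directed path in the big model from $o$ to a target vertex $y$. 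Given a directed path $o = z_0 \to z_1 \to \cdots \to z_m = y$ in the $(\bs{E},\bs{r},p)$-model, I want to produce, for each $j$, a vertex $z_j' \in z_j + \Z_{\le 0} e_1$ (note the direction: $z_j \in (z_j')_{\{1+\}}$) that lies in the small cluster $\mc{C}_o(\bs{E},\bs{F},\bs{r},\bs{q},p)$. The step $z_j \to z_{j+1}$ uses an arrow $e \in \mc{G}_{z_j}$ in the big model. If $z_j \in \Omega_+$, then $\mc{G}_{z_j} = E_i$ for the same $i$ in both models (the $E$-part of the environment is identical), so the same arrow $e$ is available from $z_j$ in the small model; combined with $e_1 \in \uE$ one can slide $z_j'$ up to $z_j$ along $e_1$-arrows (all present since intermediate sites on that segment — wait, they need not be $\Omega_+$) — this is where care is needed. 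If $z_j \in \Omega_-$, the big model's arrow $e$ need not be available in the small model (where $\mc{G}_{z_j} = F_i \subset \mc{E}$), but the \emph{only} way the path used an $\Omega_-$ site productively, other than via directions in $\uF \supset \mc{E}\setminus\uE$, is constrained, and moreover the big model allows the $-e_1$ move there, which is precisely the move we are allowed to ``undo'' when passing to the $_{\{1+\}}$-closure.

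The cleanest route, I think, is: first prove the statement for the \emph{2-valued} sandwiching models in \eqref{2-valued-bound} and \eqref{banana1}, i.e.\ show $\mc{C}_o(\uE,\mc{E}\setminus\uE,p)_{\{1+\}} = \mc{C}_o(\uE, p)$ (where the right side has $F = \mc{E}$), by a direct path-surgery argument: take a path in $\mc{C}_o(\uE,p)$, and whenever it sits at an $\Omega_-$ site and wants to use a direction $e \notin \uF$, replace the relevant portion — but since $\uF \supset \mc{E}\setminus\uE$ and $\oE \subset \mc{E}_+$, the missing directions are exactly some elements of $\uE \subset \mc{E}_+$, in particular these are ``forward'' directions, and one shows any such move from an $\Omega_-$ site can be rerouted at the cost of an $e_1$-translation. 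Then the general case follows by the sandwich \eqref{2-valued-bound} together with \eqref{banana1}, since both the upper and (the $_{\{1+\}}$-closure of the) lower 2-valued bounds coincide, forcing equality for everything in between; the $L_x$ statement is then immediate from the definition of $L_x$ and Remark~\ref{rem:L}, noting $L_x = -\infty$ is allowed and corresponds to the whole $e_1$-line through $x$ lying in the cluster. The main obstacle I anticipate is the path-surgery at $\Omega_-$ sites: making precise that a ``forward'' arrow used in the big model at an $\Omega_-$ site can always be traded for a detour that stays in the small cluster after allowing $+e_1$ shifts — this likely needs the structural facts that $e_1 \in \uE$ and $\mc{E}\setminus\uE \subset \uF$ in an essential, somewhat delicate combinatorial way, and getting the bookkeeping of the $e_1$-translations to close up the induction is the crux.
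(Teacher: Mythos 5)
The easy inclusion $\mc{C}_o(\bs{E},\bs{F},\bs{r},\bs{q},p)_{\{1+\}}\subset\mc{C}_o(\bs{E},\bs{r},p)$ is handled exactly as in the paper, but both routes you offer for the hard inclusion have genuine gaps. The proposed reduction to the $2$-valued model is incorrect: proving $\mc{C}_o(\uE,\mc{E}\setminus\uE,p)_{\{1+\}}=\mc{C}_o(\uE,p)$ only yields the chain $\mc{C}_o(\uE,p)\subset\mc{C}_o(\bs{E},\bs{F},\bs{r},\bs{q},p)_{\{1+\}}\subset\mc{C}_o(\bs{E},\bs{r},p)$, and the two ends do \emph{not} coincide when $k\ge 2$ (take $E_1=\{e_1\}$, $E_2=\{e_1,e_2\}$, so $\uE=\{e_1\}$: the $\bs{E}$-model is strictly larger than the $\uE$-model), nor does the upper bound $\mc{C}_o(\mc{E}_+,\mc{E},p)$ of \eqref{2-valued-bound} equal $\mc{C}_o(\bs{E},\bs{r},p)$ in general. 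The right-hand side of the theorem depends on all of $\bs{E},\bs{r}$, not just on $\uE$, so no sandwich between two fixed $2$-valued models can ``force equality for everything in between.''

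The direct path-surgery induction is left unresolved exactly at its crux, and in fact it cannot be carried out as a purely deterministic rerouting: for the orthant model on the all-$\Omega_-$ configuration one has $(\mc{C}_o)_{\{1+\}}=\{(i,j):j\le 0\}$ while $\mc{C}_o^*=\Z^2$ (the paper's remark after the proof), so any correct argument must invoke almost-sure properties of the environment, and your sketch never identifies which ones or where they enter. Concretely, the shadow point $z_j'$ can in general be slid neither up (this needs $e_1$-arrows, which the small model may lack at $\Omega_-$ sites, e.g.\ the orthant model) nor down (this needs $-e_1$, absent at $\Omega_+$ sites since $\oE\subset\mc{E}_+$), and a step $e\in\uE$ taken by the big model at an $\Omega_-$ site has no local substitute in the small model. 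The paper avoids path surgery altogether: it first proves the dichotomy that a.s.\ either $L_x=-\infty$ for all $x$ or $L_x$ is finite for all $x$ (Lemmas \ref{lem:Wfinite1} and \ref{lem:Wfinite2} -- note that $L_x<\infty$, i.e.\ the $[-\infty,\infty)$ claim, is itself a nontrivial probabilistic fact, not ``immediate from the definition''), and then, in the finite case, shows that $\bar S_L$ with $w=L$ is an $(\bs{E},+)$-barrier (Lemma \ref{lem:Wbarrier}); since barriers involve only $\Omega_+$ sites and do not depend on $\bs{F}$ (Remark \ref{rem:F}), the same barrier confines the $F=\mc{E}$ model (Lemma \ref{lem:try_this}), giving $L^*_x\ge L_x$, which combined with the trivial $L^*_x\le L_x$ from \eqref{banana2} proves both assertions. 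To complete your proposal you would need either this global barrier duality or a new argument that both handles the forward steps at $\Omega_-$ sites and makes explicit use of the randomness.
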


Note that it is not true in general that $\mc{C}_o(\bs{E},\bs{F},\bs{r},\bs{q},p)=\mc{C}_o(\bs{E},\bs{r},p)$.  However, roughly speaking Theorem \ref{thm:main1} says that if you only care about the \emph{outer boundary} of $\mc{C}_o$ then under Condition \ref{cond1} you may as well set $\ell=1$ and $F_1=\mc{E}$. Another way of viewing this result is that $\mc{C}_o(\bs{E},\bs{r},p)$ is $\mc{C}_o(\bs{E},\bs{F},\bs{r},\bs{q},p)$ with its holes filled in.

The above results reveal that under Condition 1, a special role is played by the case $\ell=1$, $F_1=\mc{E}(d)$.  For this reason we will state some results in this special case, i.e. assuming the following condition
\begin{condition}
\label{cond2}
$d\ge 2$, $e_1\in \uE$, $\oE\subset \mc{E}_+$, 
$\ell=1$, and $F_1=\mc{E}(d)$.
\end{condition}

We now state our second main result which reveals a non-trivial phase transition for the occurrence of the event $\{\mc{C}_o=\Z^d\}$.
\begin{theorem}
\label{thm:main2}
Assume Condition \ref{cond2}.  Then there exists $p_c(\bs{E},d,\bs{r})\in (0,1)$ such that:
\begin{itemize}
\item[] if $p<p_c$ then $\mc{C}_o(\bs{E},\bs{r},p)=\Z^d$ almost surely, and 
\item[] if $p>p_c$ then $L_x(\bs{E},\bs{r},p)$ is finite for every $x\in \Z^{d}$ almost surely (so $\mc{C}_o(\bs{E},\bs{r},p)\ne \Z^d$).
\end{itemize}
\end{theorem}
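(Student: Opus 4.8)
The plan is to reduce the theorem, via a coupling and a zero--one law, to two statements about extreme values of $p$, and then to prove those by a direct path count (for $p$ near $1$) and a renormalisation argument (for $p$ near $0$).

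First I would couple all values of $p$ on $(\Omega,\mc{F},\P)$: increasing $p$ only enlarges $\Omega_+$, and on $\Omega_+$ the environment is some $E_i\subset\mc{E}$ while on $\Omega_-$ it is $\mc{E}$, so increasing $p$ only removes arrows; hence $\mc{C}_o(\bs{E},\bs{r},p)$ is non-increasing in $p$, and so is $p\mapsto\Pp(\mc{C}_o(\bs{E},\bs{r},p)=\Z^d)$. Next, a zero--one law: since $e_1\in\uE$ the arrow in direction $e_1$ is present at every site, so $-e_1\to o$ always, whence $\mc{C}_o\subset\mc{C}_{-e_1}$ and $\{\mc{C}_o=\Z^d\}\subset\{\mc{C}_{-e_1}=\Z^d\}$; these two events have equal probability, so they are a.s.\ equal, and therefore $\{\mc{C}_o=\Z^d\}$ is a.s.\ invariant under the shift by $e_1$, which is ergodic (indeed mixing) for the i.i.d.\ field $(U_x,U'_x)_x$. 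Thus $\Pp(\mc{C}_o=\Z^d)\in\{0,1\}$ for each $p$, and being non-increasing it defines a threshold $p_c=p_c(\bs{E},d,\bs{r})\in[0,1]$ with $\Pp(\mc{C}_o=\Z^d)=1$ for $p<p_c$ and $=0$ for $p>p_c$. Finally I would record a dichotomy identifying the second bullet with $\{\mc{C}_o\ne\Z^d\}$: for $p<1$, if $L_o=-\infty$ then the whole $e_1$-axis lies in $\mc{C}_o$; a.s.\ that axis contains $\Omega_-$-sites with first coordinate tending to $-\infty$, and each such site emits all of $\mc{E}$, so the lines through $\pm e_i$ ($i\ge2$) also have $L=-\infty$, and iterating over $\Z^{d-1}$ gives $\mc{C}_o=\Z^d$. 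Hence a.s.\ either $\mc{C}_o=\Z^d$ or $L_x$ is finite for every $x$, so once $p_c\in(0,1)$ is established both bullets follow. It remains to prove $p_c>0$ and $p_c<1$.

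For $p_c<1$ I would argue directly that $\Pp(o\to-ne_1)\to0$ as $n\to\infty$ when $p$ is close to $1$; by the zero--one law and the dichotomy this forces $\Pp(\mc{C}_o=\Z^d)=0$. Under Condition~\ref{cond2} every $E_i\subset\mc{E}_+$, so a $(-e_i)$-step can be made only from an $\Omega_-$-site, and along a self-avoiding path these sites are distinct. A self-avoiding path from $o$ to $-ne_1$ of length $\ell$ makes $n$ more $(-e_1)$- than $(+e_1)$-steps and as many $(-e_i)$- as $(+e_i)$-steps for $i\ge2$, so it makes $b=\tfrac12(\ell+n)\ge\ell/2$ backward steps at $b$ distinct $\Omega_-$-sites; since there are at most $(2d)^\ell$ such paths, a union bound gives
\[
\Pp(o\to-ne_1)\ \le\ \sum_{\ell\ge n}(2d)^\ell(1-p)^{\ell/2}\ =\ \frac{\bigl(2d\sqrt{1-p}\,\bigr)^{n}}{1-2d\sqrt{1-p}}\qquad\text{whenever }1-p<(2d)^{-2},
\]
which tends to $0$. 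Thus $p_c\le 1-(2d)^{-2}<1$.

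For $p_c>0$ I would use a block (renormalisation) argument. Fix a side length $\ell$ and partition $\Z^d$ into cubes of side $\ell$; call a cube \emph{good} if all of its sites lie in $\Omega_-$. For $p$ small, $\P(\text{good})=(1-p)^{\ell^d}$ is as close to $1$ as desired, so the bad cubes are a highly subcritical site percolation: the good cubes form a single infinite cluster omitting only small islands, and so with probability bounded below (in fact close to $1$) the cube of $o$ is good and is connected through good cubes to good cubes that meet the $e_1$-axis at points $-ne_1$ with $n$ arbitrarily large. Inside a good cube every arrow is present and one may cross the shared face of two adjacent good cubes freely, so one can route a directed path inside $\mc{C}_o$ from $o$ to each such $-ne_1$; hence $L_o=-\infty$ on an event of positive probability, and the zero--one law upgrades this to $\Pp(\mc{C}_o=\Z^d)=1$ for $p$ small, i.e.\ $p_c>0$ (if preferred, Remark~\ref{rem:subsets} lets one first replace $\bs{E}$ by the smallest admissible choice $k=1$, $E_1=\{e_1\}$, and run the block argument there, the resulting threshold then bounding $p_c(\bs{E},d,\bs{r})$ below uniformly). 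The step I expect to be the main obstacle is making this block argument precise: the notion of ``good cube'' must be chosen so that good cubes both percolate with overwhelming probability \emph{and} support routing a directed path all the way back to the points $-ne_1$ on the axis (not merely far away in some direction), which calls for a careful comparison with highly supercritical finite-range-dependent percolation. By contrast the coupling, the zero--one law, the dichotomy, and the bound $p_c<1$ are comparatively routine.
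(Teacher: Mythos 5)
Your overall strategy is sound, and it departs from the paper at two points in ways worth noting. The paper gets its zero--one statement indirectly: it shows (Lemmas \ref{lem:Wfinite1}--\ref{lem:Wfinite2}, \ref{lem:Wbarrier}, \ref{lem:try_this}) that finiteness of all $L_x$ is \emph{equivalent} to the existence of an $(\bs{E},+)$-barrier with $w(o)\le 0$, and then applies translation invariance and ergodicity to the barrier-existence event; your route --- $e_1\in\mc{G}_x$ for all $x$ gives $\mc{C}_o\subset\mc{C}_{-e_1}$, so $\{\mc{C}_o=\Z^d\}$ is a.s.\ invariant under an ergodic shift and has probability $0$ or $1$ --- is simpler and perfectly adequate for Theorem \ref{thm:main2} alone (the paper needs the barrier machinery anyway for Theorems \ref{thm:main1} and \ref{thm:B}). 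For $p_c<1$ you and the paper do the same thing: your explicit count (a self-avoiding path from $o$ to $-ne_1$ of length $\ell$ makes $(\ell+n)/2\ge\ell/2$ backward steps, each forced at a distinct $\Omega_-$ site because $\oE\subset\mc{E}_+$) is correct and is exactly the SAW argument the paper cites from \cite{DRE}. For $p_c>0$ the paper is lighter than you anticipate: by Remark \ref{rem:subsets} one may delete all arrows at $\Omega_+$ sites and compare directly with site percolation of the $\Omega_-$ sites at parameter $1-p$; for $1-p>p_c^{\text{site}}$, with positive probability $o$ lies in the unique infinite cluster, which contains infinitely many points $-ke_1$, so $L_o=-\infty$ with positive probability and your zero--one law finishes. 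Your block construction also works --- and your worry about dependence is unfounded, since with ``good $=$ every site of the cube lies in $\Omega_-$'' the cubes are genuinely i.i.d.\ --- but it only covers $p$ near $0$ and is heavier than necessary.

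The one genuine gap is in your dichotomy. You assert ``a.s.\ either $\mc{C}_o=\Z^d$ or $L_x$ is finite for every $x$,'' but your argument only rules out $L_x=-\infty$; it never shows $L_x<+\infty$, i.e.\ that $\mc{C}_o$ meets every line $x+\Z e_1$, and this is part of the second bullet of the theorem. The paper proves it separately (Lemma \ref{lem:Wfinite1}) by an explicit path construction, and under Condition \ref{cond2} the fix is short but must be said: from $o$ take the step $e_1$ at every $\Omega_+$ site and, at every $\Omega_-$ site, a step $\pm e_j$ ($j\ge 2$) moving the $j$-th coordinate towards $x\cdot e_j$, until all coordinates $2,\dots,d$ match those of $x$; all coordinates along this path are monotone, so it is self-avoiding and keeps hitting fresh sites with independent environments, and since $1-p>0$ it a.s.\ matches every transverse coordinate, giving $L_x<\infty$. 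A second, cosmetic, point: you state the $-\infty$ propagation only for $L_o$, but to conclude the dichotomy you need it for every $x$ (if $L_x=-\infty$ for some $x$, run the identical argument from the line $x+\Z e_1$, as in the paper's Lemma \ref{lem:Wfinite2}); with these repairs your proof is complete.
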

We conjecture that $\mc{C}_o=\mathbb{Z}^d$ in the case $p=p_c$ as well.

When $d=2$, Theorem \ref{thm:main2} follows from \cite[Propositions 2.3 and 2.4]{DRE2}.  Those results also imply a version ($d=2$ only) of the Theorem under Condition \ref{cond1}, where the conclusion $\mc{C}_o=\Z^2$ (when $p<p_c$) is replaced with 
$\bar{\mc{C}}_o=\Z^2$ (when $p<p_c$),  where $\bar{\mc{C}}_o$ is $\mc{C}_o$ with its finite ``holes'' filled in (and note that all of the holes are finite in 2 dimensions).  In general dimensions we do not know whether all holes in $\mc{C}_o$ are finite.  Theorems \ref{thm:main1} and \ref{thm:main2} seem to be the most natural way of describing the phase transition in general dimensions.

Theorems \ref{thm:main1} and \ref{thm:main2} make use of a dual percolation model. When $d>2$ this is a type of surface percolation. See \cite{GH10, GH12, GHK14} for recent work on other higher dimensional percolation structures.

\begin{figure}
	\begin{tabular}{lcr}
\hspace{-2cm}\includegraphics[scale=1]{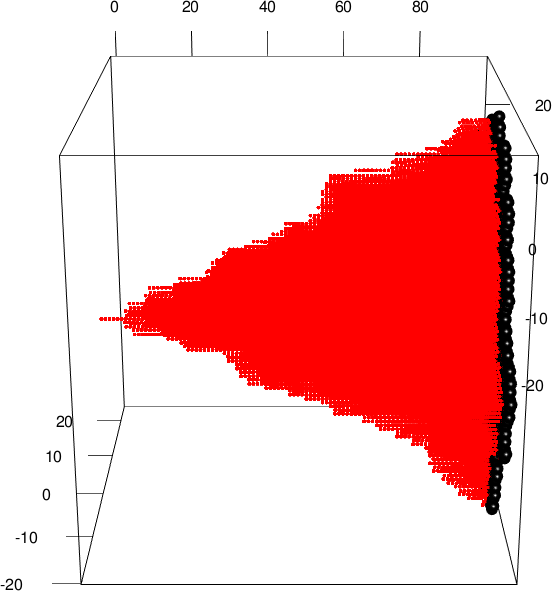}&
\includegraphics[scale=1]{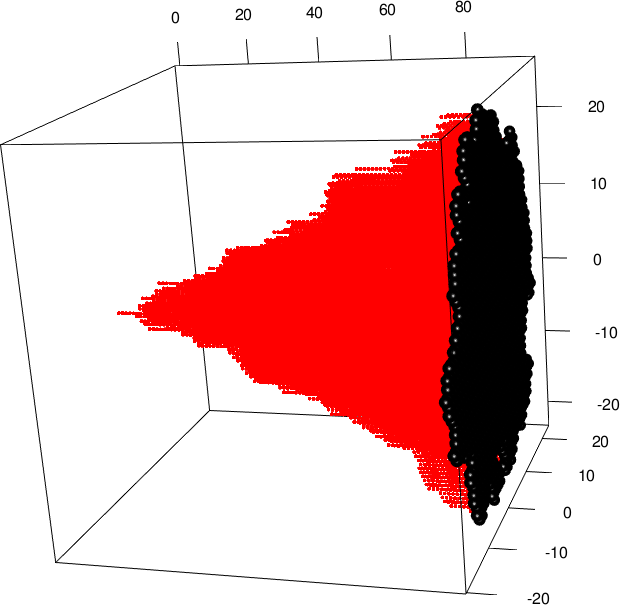}&
\includegraphics[scale=1]{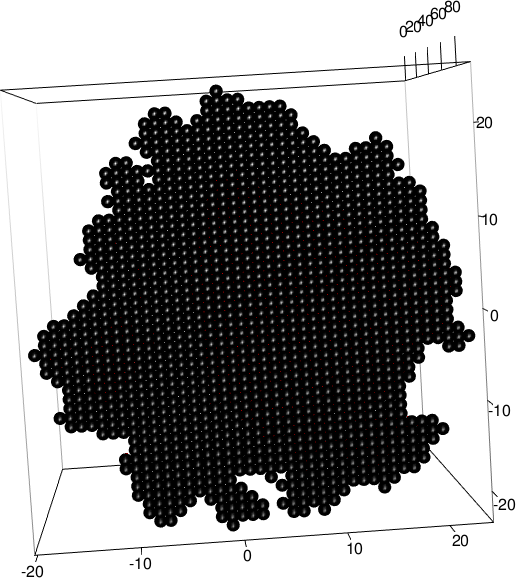}
\end{tabular}
\caption{A simulation of part of the cluster $\mc{C}_o(.9)$ for the model with $d=3$, $k=\ell=1$ and $E_1=\{e_1\}$ and $F_1=\mc{E}(3)$, viewed from 3 different angles.  The black/dark vertices are a cross-section where the first coordinate is equal to $100$.}
\label{fig:3d_e_1}
\end{figure}

It is natural to ask about asymptotic properties of the boundary of $\mc{C}_o$ when $p>p_c$.  To this end, let $\Pp$ denote the law of the model with fixed $(\bs{E},\bs{F},\bs{r},\bs{q},p)$ and let $Z$ denote the discrete hyperplane $\{y\in\Z^d: y\cdot e_1=0\}$. 
\begin{open}
\label{open:Wshape}
Fix $d\ge 2$ and assume Condition \ref{cond1}.   Prove that if $p>p_c$ then for each $v\in Z$ there exists a deterministic $\zeta(v)\in \R$ depending on $\bs{E},d,\bs{r},\bs{q},p$ (but not $\bs{F}$) such that 
\begin{equation}
n^{-1}L_{nv}\to \zeta(v), \quad \Pp-\text{almost surely as }n \to \infty.
\end{equation}
\end{open}
In \cite{Shape} a version of this result is proved for Examples \ref{exa:orthant} and \ref{exa:maximal} in general dimensions, though with the assumption that $p$ is sufficiently large.  

All of the above results concern the forward cluster $\mc{C}_o$.  A crucial difference between forward and backward clusters is that Condition \ref{cond1} does not ensure that $\mc{B}_o$ is infinite.  In the case of Example \ref{exa:orthant}, if $e_i\in \Omega_+$ and $-e_i\in \Omega_-$ for each $i\in [d]$ (this has positive probability for any $p \in (0,1)$) then there are no arrows pointing to the origin, so $\mc{B}_o=\{o\}$.  Under Condition \ref{cond2} however, $\mc{B}_o$ will be infinite, since it contains $-\mathbb{Z}_+e_1$. 

Another key difference between forward and backward clusters when $d=2$ is that under Condition \ref{cond1} $\mc{B}_o$ is ``simply connected'' as a subset of $\Z^2$, while $\mc{C}_o$ can have holes.  The former does not hold for $d>2$ (see Example \ref{exa:funnyBo} in Section \ref{sec:backwardclusters}).   It seems that for $d>2$ there is no simple geometric description of the possible boundaries for finite $\mc{B}_o$'s.  Infinite  $\mc{B}_o$ clusters appear to be more regular. It would be interesting to characterize infinite clusters that can arise as  $\mc{B}_o$.

For $x\in \Z^{d}$, let $R_x=R_x(\bs{E},\bs{F},\bs{r},\bs{q},p)=\sup\{k \in \Z:x+ke_1\in \mc{B}_o\}$.  The following result shows that under Condition \ref{cond2}, the backward and forward clusters have a phase transition at the same point $p_c$.
\begin{theorem}
\label{thm:B}
Assume Condition \ref{cond2} and let $p_c$ be as in Theorem \ref{thm:main2}.  Then
\begin{itemize}
\item[] if $p<p_c$ then $\mc{B}_o=\Z^d$ almost surely, and 
\item[] if $p>p_c$ then $R_x$ is finite for every $x\in \Z^{d}$.
\end{itemize}
\end{theorem}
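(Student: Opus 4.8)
The plan is to derive Theorem~\ref{thm:B} from Theorem~\ref{thm:main2} together with translation invariance of the environment; I do not expect to need any new percolation estimate, since all the substantive work is already contained in Theorem~\ref{thm:main2}.

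For the regime $p<p_c$ I would argue as follows. By Theorem~\ref{thm:main2}, $\P(\mc C_o=\Z^d)=1$ (writing $\mc C_o=\mc C_o(\bs E,\bs r,p)$). Because $(\mc G_x)_{x\in\Z^d}$ is i.i.d., its law is translation invariant, so for each fixed $x$ the cluster $\mc C_x$ has the same law as $x+\mc C_o$; hence $\P(\mc C_x=\Z^d)=1$ for every $x\in\Z^d$, and by a countable intersection $\P(\mc C_x=\Z^d\text{ for all }x)=1$. On that event $o\in\mc C_x$ for every $x$, i.e.\ $x\in\mc B_o$ for every $x$, so $\mc B_o=\Z^d$.

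For the regime $p>p_c$ I would first record one soft structural fact: under Condition~\ref{cond2} \emph{every} site has an arrow in direction $e_1$ (a site of $\Omega_+$ because $e_1\in\uE$, and a site of $\Omega_-$ because $F_1=\mc E(d)$). Appending an $e_1$-step to a path gives $v+ke_1\in\mc C_o\Rightarrow v+(k+1)e_1\in\mc C_o$, so that $\mc C_o\cap(v+\Z e_1)=\{v+je_1:j\ge L_v\}$ for each $v\in Z$; prepending an $e_1$-step gives $o\in\mc C_{v+ke_1}\Rightarrow o\in\mc C_{v+(k-1)e_1}$, so that $\mc B_o\cap(v+\Z e_1)$ is downward closed and hence $\{R_v\ge N\}=\{v+Ne_1\in\mc B_o\}=\{o\in\mc C_{v+Ne_1}\}$ for every integer $N$. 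Translation invariance then gives $\P(o\in\mc C_{v+Ne_1})=\P(-(v+Ne_1)\in\mc C_o)$, which (using $-v\in Z$ and the column identity above) equals $\P(L_{-v}\le -N)$. Letting $N\to\infty$, the events $\{R_v\ge N\}$ decrease to $\{R_v=\infty\}$ and the events $\{L_{-v}\le -N\}$ decrease to $\{L_{-v}=-\infty\}$, so $\P(R_v=\infty)=\P(L_{-v}=-\infty)$, which is $0$ by Theorem~\ref{thm:main2}. A union over $v\in Z$ shows that a.s.\ $R_v<\infty$ for all $v\in Z$, and since $R_x=R_{\pi(x)}-x\cdot e_1$ for $\pi(x)=x-(x\cdot e_1)e_1\in Z$, this gives $R_x<\infty$ for every $x\in\Z^d$ a.s.

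The proof is short precisely because Theorem~\ref{thm:main2} does the heavy lifting; the only point I expect to require care is the bookkeeping that converts the backward statement about $R$ into the known forward statement about $L$. In particular one must notice that the events $\{v+Ne_1\to o\}$ are nested in $N$ — this is exactly where the omnipresence of $e_1$-arrows under Condition~\ref{cond2} enters — and track correctly the translation relating the forward cluster of $v+Ne_1$ to the forward cluster of the origin.
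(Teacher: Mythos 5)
Your argument is correct as far as it goes, and it is genuinely different from the paper's. The paper proves both halves through the $(\bs{E},+)$-barrier machinery: for $p>p_c$ it invokes the a.s.\ existence of a barrier with $w(o)>0$ and Lemma \ref{lem:try_this} to conclude $R_x<w(x)$, and for $p<p_c$ it first establishes the dichotomy ``all $R_x$ finite or all $R_x=\infty$'' and then shows that if all $R_x$ were finite then $\{x+(R_x+1)e_1\}$ would generate a barrier with $w(o)>0$, a contradiction. You bypass all of this: for $p<p_c$ you get $\mc{B}_o=\Z^d$ directly from $\P(\mc{C}_x=\Z^d)=1$ for every $x$ plus a countable intersection, and for $p>p_c$ you use the omnipresence of $e_1$-arrows under Condition \ref{cond2} to identify $\{R_v\ge N\}=\{o\in\mc{C}_{v+Ne_1}\}$ and then translation invariance to get $\P(R_v\ge N)=\P(L_{-v}\le -N)$, so that $\P(R_v=\infty)=\P(L_{-v}=-\infty)=0$ by Theorem \ref{thm:main2}. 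All the individual steps (downward closedness of $\mc{B}_o$ along $e_1$-lines, upward closedness of $\mc{C}_o$, the translation identity, the monotone limits, the reduction of general $x$ to $\pi(x)\in Z$) check out. This is shorter than the paper's route; what it gives up is the explicit barrier separating $\mc{B}_o$ from $\mc{C}_o$, which the paper's proof exhibits and which is in the spirit of the structural results that follow (Proposition \ref{formofBo}), but for the theorem itself your reduction is legitimate.

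There is, however, one genuine gap: you only prove $R_x<\infty$ in the regime $p>p_c$, whereas ``$R_x$ is finite'' in the statement (exactly as ``$L_x$ is finite'' in Theorem \ref{thm:main2}) means $R_x\in\Z$, i.e.\ also $R_x>-\infty$ --- equivalently, that every line $x+\Z e_1$ meets $\mc{B}_o$ at all. This is not automatic and the paper devotes its opening paragraphs to it: from $z\in\mc{B}_o$ one has $z_{\{1-\}}\subset\mc{B}_o$, and for $e\ne\pm e_1$, since $p<1$, a.s.\ infinitely many sites of $(z+e)_{\{1-\}}$ lie in $\Omega_-$ and hence (as $F_1=\mc{E}$) carry the arrow $-e$ into $z_{\{1-\}}$, so $R_{z+e}>-\infty$; iterating from $o$ covers all lines. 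Fortunately the repair also fits inside your own scheme: by the same downward closedness, $\{R_v<N\}=\{v+Ne_1\notin\mc{B}_o\}$, so $\P(R_v=-\infty)=\lim_{N\to-\infty}\P(L_{-v}>-N)=\P(L_{-v}=+\infty)=0$ for $p>p_c$, again by Theorem \ref{thm:main2}. Either way you need this extra step (and, as in the paper, $p<1$) before you can claim the second bullet of the theorem as stated.
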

We conjecture that $\mc{B}_o=\mathbb{Z}^d$ when $p=p_c$ as well.  The following is an immediate consequence of Theorems \ref{thm:main2} and \ref{thm:B}.
\begin{corollary}
	\label{cor:M}
	Assume Condition \ref{cond2} and let $p_c$ be as in Theorem \ref{thm:main2}.  Then
	\begin{itemize}
		\item[] if $p<p_c$ then $\mc{M}_o=\Z^d$ almost surely, and 
		\item[] if $p>p_c$ then $\mc{M}_o\ne \Z^d$, almost surely.
	\end{itemize}
\end{corollary}
\begin{open}
	Assume Condition \ref{cond2} and let $p_c$ be as in Theorem \ref{thm:main2}.  Show that when $p>p_c$, $\mc{M}_o$ is almost surely finite.
\end{open}

Although $\mc{B}_o$ need not be infinite under Condition \ref{cond1}, this condition is sufficient to ensure that $\mc{B}_o$ is infinite with positive probability (for $p\in (0,1)$), and therefore there exist infinite $\mc{B}_x$ clusters almost surely.  To see this, note that the model contains the $2$-dimensional model with $\mu(\{e_1\})=p$ and $\mu(\{-e_1,-e_2\})=1-p$, for which \cite[Proposition 3.4]{DRE} tells us that $\mc{B}_o$ is infinite with positive probability.  Then the natural analogue of Theorem \ref{thm:main1} is the following.
\begin{open}
\label{open:R*}
Assuming Condition \ref{cond1} show that 
$R_x(\bs{E},\bs{F},\bs{r},\bs{q},p)=R_x(\bs{E},\bs{r},p)$ 
almost surely on the event $\{\mc{B}_o(\bs{E},\bs{F},\bs{r},\bs{q},p) \text{ is infinite}\}$. 
\end{open}
We will prove a partial result in this direction.  
For $i\neq 1$ and $z\in\mathbb{Z}^d$, define a family of planes $Z^i(z)=\{x\in\mathbb{Z}^d: x^{\sss[k]}=z^{\sss[k]}\text{ for }k\neq 1,i\}$ and the corresponding $2$-dimensional slices of $\mathcal{B}_o$ as 
$\mathcal{B}_o^{i}(z)=Z^i(z)\cap\mathcal{B}_o$.
Call $\mathcal{B}_o$ {\it semi-finite} if each connected component of each $\mathcal{B}_o^{i}(z)$ is finite. 

If $\mc{B}_o=\mc{B}_o(\bs{E},\bs{F},\bs{r},\bs{q},p)$ corresponds to a model satisfying Condition \ref{cond1}, write $\mc{B}_o^*=\mc{B}_o(\bs{E},\bs{r},p)$ for the corresponding model satisfying Condition \ref{cond2}. Note that $\mc{B}_o^*$ is infinite, since $ke_1\in\mc{B}_o^*$ for every $k\le 0$.
\begin{proposition}
	\label{formofBo} Assume Condition \ref{cond1}. Then either
	\begin{itemize}
		\item[(i)] $\mc{B}_o$ is semi-finite, or 
		\item[(ii)] $\mc{B}_o=\mc{B}_o^*$, or
		\item[(iii)] $\mc{B}_o=(\mc{B}_o)_{\{1+\}}\neq \Z^d$ but $\mc{B}_o^*=\mathbb{Z}^d$.
	\end{itemize}
\end{proposition}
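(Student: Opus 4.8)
The plan is to analyze the structure of $\mc{B}_o$ by first understanding, for each fixed "fiber direction" $e_1$, which columns $z_{\{1+\}}$ (and their downward counterparts) are hit by $\mc{B}_o$, and then using the $e_1\in\uE$ part of Condition \ref{cond1} to propagate membership along $e_1$. First I would observe the analogue of Remark \ref{rem:L} for backward clusters: if $x\in\mc{B}_o$ and the environment at $x-e_1$ contains $e_1$ (which happens exactly when $x-e_1\in\Omega_+$, since $e_1\in\uE$), then $x-e_1\in\mc{B}_o$ as well; more generally the "$-e_1$-connectivity" of $\mc{B}_o$ is governed by whether consecutive sites along an $e_1$-column lie in $\Omega_+$. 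The key dichotomy to extract is: either some slice $\mc{B}_o^i(z)$ has an infinite connected component, or all of them are finite (case (i)). So assume from now on that some $\mc{B}_o^i(z)$ has an infinite component $K$.

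The next step is to show that an infinite component $K$ in a $2$-dimensional slice forces $K$ to be "upward-closed in $e_1$ together with a full half-line of $-e_1$ steps" — i.e.\ $K$ contains $w_{\{1+\}}$ for a cofinite (in fact all-but-finitely-many, then all) set of lattice points $w$ in that slice, and contains $-\Z_+e_1$-type rays. The mechanism: within the $2$-dimensional slice $Z^i(z)$, the induced model satisfies (a version of) Condition \ref{cond1}, so I can invoke the structural results of \cite{DRE} on $2$-dimensional backward clusters — in particular that an infinite $\mc{B}_o$ in $d=2$ under Condition \ref{cond1} is "simply connected" as noted in the text, and its complement is contained in a half-space-like region. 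Combined with Remark \ref{rem:subsets} monotonicity and the comparison $\mc{C}_x(\uE,\mc{E}\setminus\uE,p)\subset\mc{C}_x(\bs E,\bs F,\bs r,\bs q,p)$, this pins down the slice to be either all of $Z^i(z)$ or a set of the form $(\text{slice})_{\{1+\}}$ minus something finite. Then I would bootstrap across slices: if one slice through $z$ is infinite in this way, then for every neighbouring $z'$ differing in one coordinate $k\ne 1,i$, the site $z'$ (or $z'+me_1$ for suitable $m$) lies in $\mc{B}_o$ via an arrow from within the already-controlled slice, and re-running the $2$-dimensional argument in the slice $Z^{i'}(z')$ (possibly a different direction $i'$) shows that slice is also infinite. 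An irreducibility/connectedness argument on $\Z^{d-1}$ (the space of columns) then shows either every column meets $\mc{B}_o$ — and, tracking whether we ever get a full $-e_1$ ray, we land in case (ii) $\mc{B}_o=\mc{B}_o^*=\Z^d$ or the "shifted" case — or we get exactly $\mc{B}_o=(\mc{B}_o)_{\{1+\}}$, which is case (iii) once we check $\mc{B}_o^*=\Z^d$ in that situation.

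The cleanest way to separate (ii) from (iii) is: once we know $\mc{B}_o\supseteq(\mc{B}_o)_{\{1+\}}$ and $\mc{B}_o$ meets every $e_1$-column, ask whether $\mc{B}_o$ also contains a downward ray $-\Z_+e_1+w$ from some $w$. If it does, then using $e_1\in\uE$ and the fact that along any column the $-e_1$ arrow from $\Omega_+$ sites plus the structure forces the whole column in, one propagates to get $\mc{B}_o=\Z^d$; since $\mc{B}_o^*\supseteq\mc{B}_o$ always (by \eqref{banana1}-type monotonicity: $F_1=\mc{E}$ only adds arrows, hence enlarges $\mc{B}_o$), we get $\mc{B}_o=\mc{B}_o^*=\Z^d$, which is case (ii). If it does not contain any such downward ray, then $\mc{B}_o$ is bounded below in every column, so $\mc{B}_o=(\mc{B}_o)_{\{1+\}}\ne\Z^d$; and the same arguments applied to the enlarged model (where $F_1=\mc{E}$ supplies $e_1$ and $-e_1$ arrows everywhere) do produce the downward rays, giving $\mc{B}_o^*=\Z^d$, which is case (iii).

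The main obstacle I anticipate is the slice-to-slice bootstrapping in step two and three: reducing cleanly to the $d=2$ results of \cite{DRE} requires checking that the $2$-dimensional slice model genuinely satisfies Condition \ref{cond1} in the relevant pair of coordinates $(1,i)$ — this needs $e_1\in\uE$ (fine) and an analogue of "$-e_2$ or $e_1$ always available", which for a general slice direction $i$ means I need $-e_i\in\uF$ whenever $-e_i\notin\uE$, exactly the content of $\uF\supset\mc{E}\setminus\uE$ in Condition \ref{cond1}, but one must be careful that the slice is not degenerate (e.g.\ when $\oE$ or $\uF$ interacts awkwardly with the slice). The other delicate point is ruling out "intermediate" infinite components that are neither finite-in-every-slice nor of the two clean forms — this is where the $2$-dimensional simple-connectedness/half-plane structure from \cite{DRE} does the real work, and I would want to state the precise $d=2$ consequence I am using as a lemma before proving the proposition.
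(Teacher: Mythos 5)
There is a genuine gap, and it sits exactly where the hardest part of the statement lives. Your step two asserts that an infinite component of a slice forces $\mc{B}_o$ to be essentially upward-closed (to contain $w_{\{1+\}}$ for all but finitely many $w$ of the slice), and your final dichotomy (``contains a downward ray $\Rightarrow\mc{B}_o=\Z^d$; otherwise case (iii)'') rests entirely on that. But upward-closedness is not implied by Condition \ref{cond1}: an infinite slice component can just as well meet every $e_1$-column in a \emph{downward} half-line $(x+K_x e_1)+\Z_- e_1$ --- think of the $d=2$ orthant model, where $\mc{B}_o$ can be the region \emph{below} a decreasing function, so $\mc{B}_o\neq\Z^d$ and $\mc{B}_o\neq(\mc{B}_o)_{\{1+\}}$. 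That configuration is precisely the substantive half of alternative (ii): one must prove $\mc{B}_o=\mc{B}_o^*$ even though the two models have different environments at $\Omega_-$ sites. The paper does this by showing that the column tops $S=\{x+(K_x+1)e_1:x\in\Z^d\}$ form an $(\bs{E},+)$-barrier in the sense of Definition \ref{def:barrier}, observing that being a barrier is unaffected by replacing every $\Omega_-$ environment by $\mc{E}$, and then using the barrier machinery (as in Lemma \ref{lem:try_this}) to conclude $\mc{B}_o=\mc{B}_o^*$. Your outline never produces this case, and no step in it yields the identity $\mc{B}_o=\mc{B}_o^*$ for a proper subset of $\Z^d$; as written your trichotomy can only output $\mc{B}_o=\Z^d$ or case (iii), so it does not prove the proposition.

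A second, related problem is the intended reduction to the $d=2$ structure theorems of \cite{DRE} inside a slice. The set $\mc{B}_o^i(z)$ is the intersection of the $d$-dimensional backward cluster with the plane $Z^i(z)$; it is \emph{not} the backward cluster of a planar model, because the paths witnessing membership in $\mc{B}_o$ leave the slice (and $o$ need not even lie in the slice), so the simple-connectedness and half-plane results cannot be quoted for it. What does transfer is only the path-construction device: from a point of $\mc{B}_o^c$ follow $e_1$ at $\Omega_+$ sites and $-e_i$ at $\Omega_-$ sites (legal since $e_1\in\uE$ and $-e_i\in\uF$), and use that $\mc{B}_o^c$ is forward-closed. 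The paper uses this only to show that $Z^i(z)\setminus\mc{B}_o$ is connected and that an infinite slice component meets each column in a half-infinite, never bi-infinite, interval (Lemma \ref{lem:intervals}), and then runs elementary almost-sure propagation lemmas along columns (Lemmas \ref{lem:segment}--\ref{lem:goat2}) instead of any imported planar theorem. You would need to supply these ingredients explicitly, and even then the downward-half-line case still requires the barrier argument to reach (ii).
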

 For comparison, in the case of the orthant model with $d=2$, the corresponding alternatives are respectively that $\mc{B}_o$ is finite; that $\mc{B}_o$ is either $\Z^2$ or the region below a decreasing function; or that $\mc{B}_o$ is the region above a decreasing function. See Proposition 3.8 of \cite{DRE} for a more precise statement. 

It is trivial that $\mc{B}_o$ is connected as a subset of the graph $\Z^d$.  For the complementary cluster, we will show the following.
\begin{proposition}
\label{prop:connectedness} Assume Condition \ref{cond1}. Then either $\mc{B}_o^c$ is empty or $\mc{B}_o^c$ is infinite and connected as a subset of the graph $\Z^d$.
\end{proposition}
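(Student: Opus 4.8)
The plan is to show that every vertex not in $\mc{B}_o$ has an infinite directed escape path that stays in $\mc{B}_o^c$ and moreover that any two such vertices can be joined inside $\mc{B}_o^c$. First I would record the basic dichotomy for membership: $x\in\mc{B}_o$ iff $o\in\mc{C}_x$, i.e.\ iff there is a directed path from $x$ reaching $o$; so $x\in\mc{B}_o^c$ iff no directed path from $x$ hits $o$. The key monotonicity-type observation is that if $x\in\mc{B}_o^c$ and $y$ is any out-neighbour of $x$ (i.e.\ $y=x+e$ with $e\in\mc{G}_x$), then $y\in\mc{B}_o^c$ as well: a path from $y$ to $o$ would extend (prepending the arrow $x\to y$) to a path from $x$ to $o$, unless $y=o$ — but $o\in\mc{B}_o$, not $\mc{B}_o^c$, so if $x\in\mc{B}_o^c$ then $o\notin\mc{C}_x$ forces $x\ne o$ and no out-neighbour equals $o$ either. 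Hence $\mc{B}_o^c$ is \emph{forward-closed}: following arrows out of any vertex of $\mc{B}_o^c$ keeps you in $\mc{B}_o^c$. Under Condition~\ref{cond1}, as observed in the excerpt, from every site at least one of the arrows $e_1$ or $-e_2$ is present, so every $\mc{C}_x$ contains an infinite self-avoiding path; combined with forward-closedness, this shows that if $\mc{B}_o^c\ne\emptyset$ then from any $x\in\mc{B}_o^c$ there is an infinite self-avoiding directed path inside $\mc{B}_o^c$, so $\mc{B}_o^c$ is infinite.

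For connectedness, the idea is to exploit the rigid structure of these infinite forward paths. From forward-closedness, for every $x\in\mc{B}_o^c$ the entire forward cluster $\mc{C}_x$ is contained in $\mc{B}_o^c$, and $\mc{C}_x$ is connected as a subset of $\Z^d$ (it is a union of directed paths from $x$). So it suffices to show that for any two vertices $x,y\in\mc{B}_o^c$ the clusters $\mc{C}_x$ and $\mc{C}_y$ can be linked within $\mc{B}_o^c$. I would do this by finding a common far-away vertex that both clusters' canonical ($e_1$-or-$(-e_2)$) escape paths must pass near. Concretely: run the canonical path out of $x$ and out of $y$; each such path, at every step, moves by $+e_1$ (coordinates $x^{[j]}$, $j\ne1$, fixed and $x^{[1]}$ increasing) or by $-e_2$ (coordinate $x^{[2]}$ decreasing, $x^{[1]}$ fixed). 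The point is that both escape paths are confined to a half-space $\{z:z^{[1]}\ge x^{[1]}\}$ (resp.\ $\ge y^{[1]}$) and drift towards large $e_1$ and small $e_2$; one then shows, using Condition~\ref{cond1} again to control the environment along a connecting segment, that one can reach from a point on the $x$-path to a point on the $y$-path through vertices whose $e_1$-coordinate is large enough that they too lie in $\mc{B}_o^c$ (since a vertex with very large first coordinate cannot have $o$ in its forward cluster — by Theorem~\ref{thm:main1}/Theorem~\ref{thm:main2} the forward cluster is "essentially" a forward region, and $\mc{B}_o^c$ being nonempty forces $\mc{C}_o\ne\Z^d$, hence $\mc{B}_o\ne\Z^d$, with $\mc{B}_o$ confined away from the far $e_1$ direction).

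The main obstacle I anticipate is the last step: producing an \emph{explicit} connecting route between the two escape paths that provably stays inside $\mc{B}_o^c$, rather than merely arguing abstractly that $\mc{B}_o^c$ is large. One clean way around this is to reduce to a $2$-dimensional slice: fix all coordinates except $1$ and $2$ and use the known $d=2$ structure theory for $\mc{B}_o$ (Proposition~3.8 of \cite{DRE}) to describe $\mc{B}_o\cap Z^2(\cdot)$ as lying on one side of a monotone curve, so its complement in the slice is connected; then glue slices together using the $\pm e_i$, $i\ge3$, arrows guaranteed along appropriate sites. I would therefore structure the proof as: (1) forward-closedness of $\mc{B}_o^c$; (2) hence $\mc{B}_o^c$ nonempty $\Rightarrow$ infinite, and each $\mc{C}_x\subset\mc{B}_o^c$ is connected; (3) slice into $2$D, invoke the $d=2$ description to get connectedness of each slice's complement; (4) connect distinct slices through common vertices of large $e_1$-coordinate, which are automatically in $\mc{B}_o^c$. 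Steps (3)–(4) are where the real work sits; steps (1)–(2) are short.
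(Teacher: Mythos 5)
Your steps (1)--(2) are exactly how the paper begins: $\mc{B}_o^c$ is forward-closed (if $y\in\mc{B}_o^c$ then $\mc{C}_y\subset\mc{B}_o^c$), and since each $\mc{C}_y$ is infinite under Condition \ref{cond1}, nonemptiness forces $\mc{B}_o^c$ infinite. The gap is in how you propose to carry out steps (3)--(4), which you yourself flag as the real work. First, you cannot invoke the $d=2$ structure theory (Proposition 3.8 of \cite{DRE}) to say that $\mc{B}_o\cap Z^2(z)$ lies on one side of a monotone curve: that result describes the backward cluster of a genuinely $2$-dimensional model, whereas the slice of the $d$-dimensional $\mc{B}_o$ is not the backward cluster of any planar model (paths from a point of the slice to $o$ may leave the slice), and indeed Example \ref{exa:funnyBo} shows slices of $\mc{B}_o$ can be quite irregular (e.g.\ finitely many isolated points of a loop), nothing like a sub-level set of a monotone function. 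What survives is only the weaker statement that $Z^i(z)\setminus\mc{B}_o$ is connected, and to prove that one must argue directly on the complement, not on $\mc{B}_o$. Second, your gluing step rests on the claim that vertices of sufficiently large $e_1$-coordinate are automatically in $\mc{B}_o^c$; this is false in general under Condition \ref{cond1}. For instance, in the planar orthant model $\mc{B}_o$ can be the region below a decreasing curve, hence contains points with arbitrarily large first coordinate, and in case (iii) of Proposition \ref{formofBo} one even has $\mc{B}_o=(\mc{B}_o)_{\{1+\}}$, so every line $x+\Z e_1$ meeting $\mc{B}_o$ contains a full half-line of $\mc{B}_o$ in the $+e_1$ direction.

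The paper's proof repairs both defects with one device, which your own step (1) already supplies: build \emph{directed} (environment-consistent) paths started at the two given points $y_1,y_2\in\mc{B}_o^c$; by forward-closedness these automatically stay in $\mc{B}_o^c$, so no separate argument about which far-away vertices lie in $\mc{B}_o^c$ is needed. Concretely, from $y_2$ follow $e_1$ at $\Omega_+$ sites and $-e_d$ at $\Omega_-$ sites (both moves are available under Condition \ref{cond1}) until the $d$-th coordinate matches that of $y_1$; iterate for coordinates $d-1,\dots,3$, arriving at two points of $\mc{B}_o^c$ in a common plane $Z^2(\cdot)$; then apply the planar two-path crossing construction of Proposition 3.8 of \cite{DRE} (moves $e_1$ from $\Omega_+$ sites and $-e_2$ from $\Omega_-$ sites) to these two points, producing paths in $Z^2(\cdot)\cap\mc{B}_o^c$ that meet. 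So the planar input is used only as a crossing argument for directed monotone paths in the complement, not as a structural description of the sliced cluster. If you reorganize your steps (3)--(4) along these lines, keeping every connecting path inside forward clusters of points already known to be in $\mc{B}_o^c$, the proof goes through.
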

Section \ref{sec:backwardclusters} contains further discussion of related questions. 
The remainder of this paper is organised as follows.  In Section \ref{sec:forward clusters} we prove Theorems \ref{thm:main1} and \ref{thm:main2}.  In Section \ref{sec:backwardclusters} we prove Theorem \ref{thm:B} and Propositions \ref{formofBo} and \ref{prop:connectedness}.

\section{Forward Clusters: Proof of Theorems \ref{thm:main1} and \ref{thm:main2}}
\label{sec:forward clusters}
Throughout this section $d\ge 2$, $\bs{E}$, $\bs{F}$, $\bs{r}$, and $\bs{q}$ as in Condition \ref{cond1} are fixed.  

\begin{lemma}
\label{lem:Wfinite1}
Assume Condition \ref{cond1} and let $p\in (0,1)$.  Then 
\begin{align}
L_x<\infty \text{ for every $x\in \Z^{d}$, a.s.}  
\end{align}
\end{lemma}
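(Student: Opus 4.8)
The plan is to prove the equivalent statement that the ``shadow'' of $\mc{C}_o$ in the $e_1$-direction is all of $\Z^{d-1}$. Let $\pi\colon\Z^d\to\Z^{d-1}$ be the projection $\pi(y)=(y^{\sss[2]},\dots,y^{\sss[d]})$ forgetting the first coordinate. Since $\pi$ is constant along lines parallel to $e_1$, we have $L_x<\infty$ if and only if $\pi(x)\in\pi(\mc{C}_o)$, and hence $\{L_x<\infty\text{ for every }x\in\Z^d\}=\{\pi(\mc{C}_o)=\Z^{d-1}\}$. So it suffices to show $\pi(\mc{C}_o)=\Z^{d-1}$ almost surely.

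First I would record a routing lemma \emph{inside} $\mc{C}_o$ along lines parallel to $e_1$. Since $e_1\in\uE$, every type-$E$ site retains its $+e_1$-arrow; and since $-e_1\notin\mc{E}_+\supseteq\uE$ we have $-e_1\in\mc{E}\setminus\uE\subseteq\uF$, so every type-$F$ site retains its $-e_1$-arrow. Consequently, for any $y\in\mc{C}_o$, taking successive $+e_1$-steps stays in $\mc{C}_o$ until reaching the first type-$F$ site $y^{+}:=y+T_{+}(y)e_1$ on the ray $\{y+ke_1:k\ge0\}$ (which is then itself in $\mc{C}_o$), and taking successive $-e_1$-steps stays in $\mc{C}_o$ until reaching the first type-$E$ site $y^{-}:=y-T_{-}(y)e_1$ on the ray $\{y-ke_1:k\ge0\}$. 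Because $p\in(0,1)$ and the site-types are i.i.d., almost surely every $+e_1$-ray contains a type-$F$ site and every $-e_1$-ray contains a type-$E$ site (countably many rays, each with probability $1$); on this event $T_\pm(y)<\infty$ and hence $y^{+},y^{-}\in\mc{C}_o$ for every $y\in\mc{C}_o$.

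Next, fix a transverse direction $j\in\{2,\dots,d\}$ and let $\delta_j\in\Z^{d-1}$ denote the unit vector in the $(j-1)$-th coordinate. On the above almost-sure event I claim that $z\in\pi(\mc{C}_o)$ implies $z\pm\delta_j\in\pi(\mc{C}_o)$. Indeed, choose $y\in\mc{C}_o$ with $\pi(y)=z$. Since $-e_j\notin\mc{E}_+\supseteq\uE$ we have $-e_j\in\uF$, so the type-$F$ site $y^{+}$ has its $-e_j$-arrow, giving $y^{+}-e_j\in\mc{C}_o$ and $\pi(y^{+}-e_j)=z-\delta_j$. For $z+\delta_j$, split cases: if $e_j\in\uE$ then the type-$E$ site $y^{-}$ has its $+e_j$-arrow, so $y^{-}+e_j\in\mc{C}_o$; if $e_j\notin\uE$ then $e_j\in\mc{E}\setminus\uE\subseteq\uF$, so the type-$F$ site $y^{+}$ has its $+e_j$-arrow, so $y^{+}+e_j\in\mc{C}_o$; either way $\pi$ of the new site equals $z+\delta_j$. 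Since $o\in\mc{C}_o$ gives $0\in\pi(\mc{C}_o)$, and $\pi(\mc{C}_o)$ is closed under adding and subtracting each of $\delta_2,\dots,\delta_d$, which generate $\Z^{d-1}$, we get $\pi(\mc{C}_o)=\Z^{d-1}$ almost surely.

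The only real work is in the routing lemma and the case split for the $+e_j$-direction: one must check that Condition~\ref{cond1} guarantees that \emph{every} site of one of the two types carries the required transverse arrow (so that it is present at the particular site $y^{+}$ or $y^{-}$ delivered by the $e_1$-routing), and that the $e_1$-routing is itself legitimate, which is exactly what $e_1\in\uE$ and $-e_1\in\uF$ provide. I do not expect any genuine difficulty beyond this bookkeeping; the probabilistic input is a one-line Borel--Cantelli argument over the countable family of lines parallel to $e_1$.
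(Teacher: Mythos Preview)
Your proof is correct and follows essentially the same mechanism as the paper's: slide along the $e_1$-line to reach a site of the needed type (an $E$-site or an $F$-site), and then take one transverse step, using that $e_1\in\uE$, $-e_1\in\uF$, and for each $j\ge 2$ both $-e_j\in\uF$ and $e_j\in\uE\cup\uF$.

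The only organizational difference is how the probabilistic input is handled. The paper constructs a single explicit path from $o$ to some $x+ke_1$ in two monotone phases, and needs self-avoidance of that path so that the environments it encounters are independent (hence the split into $J$ and $J'$, and the remark that the last step of $P_1$ is not $-e_1$). You instead isolate a global probability-one event up front (every line parallel to $e_1$ contains both type-$E$ and type-$F$ sites, by Borel--Cantelli over countably many lines), and on that event argue deterministically that $\pi(\mc{C}_o)$ is closed under $\pm\delta_j$. This packaging via the projection $\pi$ is a bit cleaner and sidesteps the self-avoidance bookkeeping, but the substance of the two arguments is the same.
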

\begin{proof}
Fix $x\in \Z^{d}$. We will construct a self-avoiding path $P\subset \mc{C}_o$ from $o$ to an $x_2$ of the form $x+ke_1$ for some $k\in\Z$.  It follows that $L_x<\infty$ (possibly $L_x=-\infty$). 

Let $J\subset \{2,\dots,d\}$ denote the set of indices $j\in \{2,\dots, d\}$ for which $x\cdot e_j\ge 0$ and $e_j \in \uE$, and let $J'=\{2,\dots, d\}\setminus J$.  If $J=\emptyset$, take $x_1=o$. If not, suppose that $J=\{j_1,\dots,j_k\}$ for some $k\ge 1$.  Construct a path $P_1\subset \mc{C}_o$ from $o$ as follows: whenever we are at an $F$ site, take the step $-e_1$;  whenever we are at an $E$ site take the step $e_{j_1}$, until the $j_1$-st coordinate matches that of $x$,  then continue with $j_2$ etc. Repeat this until we exhaust the coordinates in $J$. Because all its coordinates are monotonic, $P_1$ is self-avoiding, so the environments we see are independent. Therefore we do eventually exhaust the coordinates in $J$, and arrive at a point $x_1$ whose $J$ coordinates match those of $x$.  

If $J'$ is empty then all coordinates of $x_1$ (except the first) match those of $x$, and we are done (with $x_2=x_1$).  Otherwise from the point $x_1$ construct a self-avoiding path $P_2\subset \mc{C}_o$ as follows:  if at an $E$ site, take the step $e_1$. Otherwise, at an $F$ site, take a step that moves some $J'$ coordinate closer to the corresponding coordinate of $x$.  By definition of $J'$, such a step is possible at every $F$ site. 

All coordinates of $P_2$ are monotonic, so as before, this process eventually terminates at some point point $x_2\in \mc{C}_o$, whose coordinates (other than the first) match those of $x$.  Thus $L_x<\infty$ as claimed.

Note that $P_1$ followed by $P_2$ is indeed self-avoiding, despite the fact that the first coordinate initially decreases and then increases, because the last step of $P_1$ is never in the direction $-e_1$.
\end{proof}
\medskip


\begin{lemma}
\label{lem:Wfinite2}
Assume Condition \ref{cond1} and let $p\in (0,1)$.  Let $x\in \Z^{d}$.
Then almost surely on $\{L_x=-\infty\}$, we have $L_u=-\infty$ for every $u\in\Z^d$.
\end{lemma}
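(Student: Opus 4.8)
The plan is to show that, almost surely, $\{L_x=-\infty\}\subseteq\{L_u=-\infty\}$ for every neighbour $u$ of $x$ in $\Z^d$; since $\Z^d$ is connected, summing such statements along a lattice path from $x$ to an arbitrary $v$ and then intersecting over $v\in\Z^d$ yields the claim for all $u$. Throughout I would work on the almost sure event $A$ that for every $y\in\Z^d$ the set $\{y-ke_1:k\in\N\}$ contains infinitely many sites of $\Omega_+$ and infinitely many sites of $\Omega_-$ (this holds a.s.\ since the $U_y$ are i.i.d.\ and $p\in(0,1)$). Two elementary consequences of Condition \ref{cond1} will be used: since $\uE\subset\oE\subset\mc{E}_+$, every site of $\Omega_-$ has all of $-e_1,\dots,-e_d$ available, and $e_1$ is available at every site of $\Omega_+$; moreover, for each $j\ge2$, the arrow $e_j$ is available at every $\Omega_+$ site when $e_j\in\uE$, and at every $\Omega_-$ site otherwise (since then $e_j\in\mc{E}\setminus\uE\subset\uF$).

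The crux is the following: on $A\cap\{L_x=-\infty\}$, the set $S:=\{k\le0:\,x+ke_1\in\mc{C}_o\text{ and }x+ke_1\in\Omega_-\}$ is unbounded below. The point is that from any level $k$ with $x+ke_1\in\mc{C}_o$ one reaches a level in $S$ after finitely many steps: if $x+ke_1\in\Omega_+$, follow the available arrow $e_1$ repeatedly through a run of $\Omega_+$ sites until hitting an $\Omega_-$ site, which happens at finite height by $A$ and remains in $\mc{C}_o$. Suppose now $S\subset[-M,\infty)$. As $L_x=-\infty$ we may pick $k<-M$ with $x+ke_1\in\mc{C}_o$; since $k\notin S$ we have $x+ke_1\in\Omega_+$, hence $x+(k+1)e_1\in\mc{C}_o$, and iterating we find that levels $k,k+1,\dots,-M-1$ are all occupied by $\Omega_+$ sites. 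Letting $k\to-\infty$ shows that every site on the line $\{x+je_1:j\in\Z\}$ of index $\le -M-1$ lies in $\Omega_+$, contradicting the defining property of $A$.

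Granting that $\inf S=-\infty$, I would fix a neighbour $u=x+e$ and distinguish cases. If $e=\pm e_1$ then $L_u=L_x\mp1=-\infty$ by Remark \ref{rem:L}. If $e=-e_j$ with $j\ge2$, or $e=e_j$ with $j\ge2$ and $e_j\notin\uE$: for each $k\in S$ the site $x+ke_1\in\mc{C}_o$ lies in $\Omega_-$ and has the relevant arrow available, so $u+ke_1\in\mc{C}_o$ and $L_u\le k$; since $\inf S=-\infty$, $L_u=-\infty$. If $e=e_j$ with $j\ge2$ and $e_j\in\uE$: for $k\in S$, starting from $x+ke_1\in\mc{C}_o\cap\Omega_-$, follow the arrow $-e_1$ downward through a run of $\Omega_-$ sites until reaching the first $\Omega_+$ site below level $k$; by $A$ this occurs at some finite level $k'\le k-1$, the site $x+k'e_1\in\mc{C}_o$ has $e_j$ available, so $u+k'e_1\in\mc{C}_o$ and $L_u\le k-1$; again $L_u=-\infty$.

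This gives $\P(\{L_x=-\infty\}\setminus\{L_u=-\infty\})=0$ for each of the $2d$ neighbours $u$ of $x$, hence for every $v\in\Z^d$ by summing along a path, and the proof concludes by intersecting over $v\in\Z^d$. I expect the one genuinely delicate point to be the unbounded-below property of $S$ (the naive attempt to walk directly from a very negative level of the $x$-line to the $u$-line fails to control how far the first coordinate increases along the way); the remaining steps are direct constructions of paths in $\mc{C}_o$ using only the local moves guaranteed by Condition \ref{cond1}.
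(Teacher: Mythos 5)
Your proposal is correct and follows essentially the same route as the paper: on $\{L_x=-\infty\}$ one shows the downward $e_1$-line through $x$ meets $\mc{C}_o$ in infinitely many $\Omega_-$ sites (and, by sliding down through runs of $\Omega_-$ sites, in $\Omega_+$ sites when $e\in\uE$ is needed), then transfers $L=-\infty$ to each neighbour according to whether $e\in\uE$ or $e\in\mc{E}\setminus\uE\subset\uF$, and iterates over $\Z^d$. The only differences are organizational (your explicit almost-sure event $A$, the contradiction argument for $\inf S=-\infty$, and the separate $\pm e_1$ case), not substantive.
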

\begin{proof}
Suppose $L_x=-\infty$. Then $x-ke_1\in \mc{C}_o$ for infinitely many $k>0$.  Suppose $x-ke_1\in\Omega_-\cap\mc{C}_o$. Then $x-(k+1)e_1\in \mc{C}_o$ as well. If it $\in\Omega_-$ then $x-(k+2)e_1\in \mc{C}_o$ as well. We may continue in this way till we find some $x-(k+j)e_1\in\Omega_+\cap\mc{C}_o$ (because $p\in(0,1)$ implies that the probability is zero that $x-(k+j)e_1\in\Omega_-$ for every $j>0$). 
In other words, if $x-ke_1\in\Omega_-\cap\mc{C}_o$ for infinitely many $k>0$ then also $x-ke_1\in\Omega_+\cap\mc{C}_o$ for infinitely many $k>0$.

Similarly one can prove the converse.  Thus, on the event that $L_x=-\infty$, we have that infinitely many of the points $\{x-ke_1:k<0\}$ are in $\Omega_+\cap\mc{C}_o$ and infinitely many are in $\Omega_-\cap\mc{C}_o$.  Using the former, we see that infinitely many of $\{x+e-ke_1:k<0\}$ are in $\mc{C}_o$, whenever $e \in \uE$. Using the latter, we see that infinitely many of $\{x+e-ke_1:k<0\}$ are in $\mc{C}_o$, whenever $e \in \mc{E}\setminus \uE$. Thus $L_{x+e}=-\infty$ for any $e\in\mc{E}$. Using this argument repeatedly proves that $L_u=-\infty$ for every $u\in \Z^{d}$.
\end{proof}
Under Condition \ref{cond1}, this shows that $\{L_o\text{ is finite}\}=\{ L_x\text{ is finite for every }x\in\Z^d\}$ almost surely. But note that the zero-one law for these events won't be established till later in this section. 

Lemma \ref{lem:Wfinite2} can be upgraded slightly (though we will not actually make use of this fact).  For $i \in [d]$ and $x\in \Z^{d}$, let  $L_x^{\sss(i)}=\inf\{k\in\Z:x+ke_i\in \mc{C}_o\}$, and note that $L_x^{\sss(1)}=L_x$ by definition.  Then we have the following.
\begin{lemma}
\label{lem:Wfinite3}
Assume Condition \ref{cond1} and let $p\in (0,1)$. Take $x\in \Z^{d}$ and $i\in[d]$.  
Then almost surely on $\{L_x^{\sss(i)}=-\infty\}$, we have $L_u^{\sss(i)}=-\infty$ for every $u\in\Z^d$.
\end{lemma}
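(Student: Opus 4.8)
The plan is to mimic the proof of Lemma~\ref{lem:Wfinite2}, with $e_i$ and $-e_i$ playing the roles of $e_1$ and $-e_1$. That proof uses only the following consequences of Condition~\ref{cond1}: that every vector of $\uE$ is available at every site of $\Omega_+$; that every vector of $\mc{E}\setminus\uE\subseteq\uF$ is available at every site of $\Omega_-$; that the line-direction vector $e_1$ itself is available at every site of $\Omega_+$ (this is what powers the ``converse'' step, which rules out the line meeting $\mc{C}_o$ infinitely often while meeting $\Omega_-\cap\mc{C}_o$ only finitely often); and that $p<1$. The first, second and last of these are symmetric in the coordinates $\{1,\dots,d\}$ and carry over verbatim with $e_i$ in place of $e_1$ — in particular $-e_i\in\mc{E}\setminus\uE\subseteq\uF$, since $\oE\subseteq\mc{E}_+$. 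Only the third, ``$e_1$ available on $\Omega_+$'', has no automatic analogue, because Condition~\ref{cond1} guarantees $e_1\in\uE$ but not $e_i\in\uE$.

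If $e_i\in\uE$, there is in fact nothing left to prove. Condition~\ref{cond1} is invariant under permuting the coordinates $\{1,\dots,d\}$ apart from the single clause $e_1\in\uE$, so the transposition of coordinates $1$ and $i$ turns our model into another model of the same form (with the $E$- and $F$-sets, and the underlying uniforms, relabelled accordingly) which still satisfies Condition~\ref{cond1} — now the first basis vector lies in the intersection of the $E$-sets because $e_i$ did — and under this relabelling $L_x^{\sss(i)}$ becomes the quantity $L$ at the image of $x$. Lemma~\ref{lem:Wfinite2} applied to the relabelled model is exactly the statement to be proved.

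Now suppose $e_i\notin\uE$. Then $e_i\in\mc{E}\setminus\uE\subseteq\uF$ as well, so \emph{both} $e_i$ and $-e_i$ are available at every site of $\Omega_-$; consequently, if a site of $\Omega_-$ lies in $\mc{C}_o$, then so does the entire maximal run of $\Omega_-$ sites through it in direction $e_i$, together with the two $\Omega_+$ sites capping that run (a cap is entered by an $e_i$- or $-e_i$-arrow from an adjacent $\Omega_-$ site of the run, and such runs are a.s.\ finite since $p<1$). Grant, for the moment, the \emph{dichotomy}: almost surely, every ray $\{v-ke_i:k\ge 0\}$ meets either $\mc{C}_o$ in finitely many sites or $\Omega_-\cap\mc{C}_o$ in infinitely many sites. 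Then the proof finishes as in Lemma~\ref{lem:Wfinite2}: on $\{L_x^{\sss(i)}=-\infty\}$ the ray $\{x-ke_i:k\ge 0\}$ meets $\mc{C}_o$ infinitely often, hence — by the dichotomy — meets $\Omega_-\cap\mc{C}_o$ infinitely often; for $e\in\mc{E}\setminus\uE\subseteq\uF$, stepping by $e$ from those $\Omega_-$ sites shows the ray $\{x+e-ke_i:k\ge0\}$ meets $\mc{C}_o$ infinitely often, while for $e\in\uE$ one first steps down each $\Omega_-$ run to its $\Omega_+$ cap and then by $e$, with the same conclusion; thus $L_{x+e}^{\sss(i)}=-\infty$ for every $e\in\mc{E}$, and since each of the new rays again meets $\mc{C}_o$ infinitely often the dichotomy lets us repeat the step, so that iterating along $\pm e_j$-paths yields $L_u^{\sss(i)}=-\infty$ for every $u$.

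The main obstacle is therefore the dichotomy, which replaces the ``converse'' step that relied on $e_1\in\uE$. The heuristic is transparent: whether $x'+ke_i\in\mc{C}_o$ depends only on arrows out of sites \emph{other than} $x'+ke_i$, hence is independent of the environment at $x'+ke_i$ itself, so conditionally every site of the ray lying in $\mc{C}_o$ is an $\Omega_-$ site with probability $1-p>0$; once the ray meets $\mc{C}_o$ infinitely often, a (conditional) second Borel--Cantelli argument should then force it to meet $\Omega_-\cap\mc{C}_o$ infinitely often. The genuine difficulty is that the events $\{x'+ke_i\in\mc{C}_o\}$ are neither independent across $k$ nor independent of the environment at the other sites of the ray (a path to one site of the ray may run along it), so to invoke a conditional Borel--Cantelli lemma legitimately one must arrange the conditioning with care — for instance by revealing the environment on this ray last, or by a resampling argument confined to the ray — and this is the step I expect to take real work.
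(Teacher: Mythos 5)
Your reduction of the case $e_i\in\uE$ to Lemma \ref{lem:Wfinite2} by permuting coordinates is fine, and in the case $e_i\notin\uE$ your outline correctly isolates what has to be proved. But that is exactly where the genuine gap lies: the ``dichotomy'' --- that a ray $\{x-ke_i:k\ge 0\}$ meeting $\mc{C}_o$ infinitely often must almost surely meet $\Omega_-\cap\mc{C}_o$ infinitely often --- is the entire probabilistic content of the lemma in this case, and you do not prove it; you only remark that a conditional Borel--Cantelli argument ``should'' work once the conditioning is arranged with care. As you yourself note, the events $\{x-ke_i\in\mc{C}_o\}$ are strongly dependent on the environments along the ray (paths to a ray point can travel along the ray, through $\Omega_-$ sites using $\pm e_i$, or through $\Omega_+$ sites if $e_i\in\oE\setminus\uE$), so ``reveal the ray last'' is not literally available, and a resampling confined to the ray changes $\mc{C}_o$ itself. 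Without this step the proof of the new case is incomplete.

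The paper closes precisely this gap with a sequential exploration argument, and it does so with no case split on whether $e_i\in\uE$. Let $\mc{C}_o(m)$ be the set of points of $\mc{C}_o$ reachable from $o$ within $m$ steps; then $\mc{C}_o(m+1)$ is determined by the environments at points of $\mc{C}_o(m)$ only. With $M_n=\{x-ke_i:k>n\}$, define recursively $N_{r+1}=\inf\{m:\mc{C}_o(m)\cap M_{N_r}\ne\varnothing\}$. Whenever $N_{r+1}<\infty$, the exploration reaches a ray point $y_r$ whose environment has not yet been examined, so conditionally on the past it contains any fixed $e\in\mc{E}$ with probability at least $p\wedge(1-p)$ (by Condition \ref{cond1}, $e\in\uE$ or $e\in\uF$). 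Conditional Borel--Cantelli then gives: almost surely either some $N_r=\infty$, i.e.\ $L_x^{\sss(i)}>-\infty$, or $e\in\mc{G}_{y_r}$ for infinitely many ray points $y_r\in\mc{C}_o$, whence $L_{x+e}^{\sss(i)}=-\infty$; intersecting over the countably many pairs $(x,e)$ and iterating finishes the proof. Note that this construction produces, for each fixed $e$, infinitely many ray points of $\mc{C}_o$ whose environment contains $e$, which is stronger than your dichotomy; and your dichotomy itself follows from the same device, since each fresh point $y_r$ lies in $\Omega_-$ with conditional probability $1-p$. If you supply this exploration step your outline goes through; as written, the key step is missing.
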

\begin{proof}
Fix $i\in [d]$, $x \in \Z^{d}$, and $e\in \mc{E}$.  For $n \in \Z_+$, let $M_n=\{x-ke_i:k>n\}$.  Let $\mc{C}_o(m)$ denote the set of points $z\in \mc{C}_o$ for which any shortest path in $\mc{C}_o$ from $o$ to $z$ is of length $\le m$.  We explore the sets $\mc{C}_o(m)$ sequentially, starting with $m=0$, for which $\mc{C}_o(0)=\{o\}$.  Clearly $\mc{C}_o(m+1)$ consists of $\mc{C}_o(m)$ together with the points we can reach in one step from $\mc{C}_o(m)$. So $\mc{C}_o(m+1)$ can be identified just using knowledge of the environments at points in $\mc{C}_o(m)$.  

Given that $N_r$ is finite, we define $N_{r+1}=\inf\{m:\mc{C}_o(m)\cap M_{N_r} \ne \varnothing\}$.  If $N_{r+1}$ is finite then we may find some point $y_r\in M_{N_r}\cap\mc{C}_o(N_{r+1})\setminus \mc{C}_o(N_{r+1}-1)$, whose environment has not been explored prior to the iteration $N_{r+1}$. Therefore that environment is independent of what has come before, and we will have $e\in\mc{G}_{y_r}$ with probability at least  $p\wedge (1-p)>0$. If every $N_r$ is finite this gives infinitely many independent opportunities to have $e\in\mc{G}_{y_r}$. 

It follows that almost surely, either: 
\begin{itemize}
\item $N_r$ is infinite for some $r$ (in which case $L^{\sss(i)}_x>-\infty$), or
\item $L^{\sss(i)}_x=-\infty$ and $e\in \mc{G}_{y_r}$ for infinitely many points $y_r\in M_0\cap \mc{C}_o$.
\end{itemize}
The latter case this implies that $y_r+e\in \mc{C}_o$ for infinitely many $r$, so $L_{x+e}^{\sss(i)}=-\infty$.  Repeating this argument proves the result.
\end{proof}

A function $w:\Z^d\to \Z$ is called a {\em side function} if for each $y \in \Z^d$ and $k \in \Z$, 
\begin{equation}
y+w(y)e_1=y+ke_1+w(y+ke_1)e_1.\label{funky}
\end{equation}
In other words, $w$ picks out a single point on each line $y+\Z e_1$. 

\begin{definition}[$(\bs{E},+)$-Barrier]
\label{def:barrier}
Let $S\subset \Z^d$ be a set of points such that 
\begin{itemize}
\item[(s1)] there exists a side function $w$ such that $S=\{y+w(y)e_1:y \in \Z^d\}$,
\item[(s2)] $S \subset \Omega_+$,
\item[(s3)] for $e\in \mc{E}\setminus \{\pm e_1\}$, if $w(y+e)> w(y)$ then for each $k \in [w(y), w(y+e))$, we have $y+ke_1\in \Omega_+$ and $e\notin \mc{G}_{y+ke_1}$.
\end{itemize}
For each $y\in \Z^{d}$ and $e\in \mc{E}\setminus (\uE\cup \{-e_1\})$ define a set $S_{y,e}$ as follows.  If $w(y+e)> w(y)$   let $S_{y,e}=\{y+ke_1: k \in [w(y),w(y+e))\}$; otherwise let $S_{y,e}=\varnothing$.
Define $\bar{S}=S\cup \bigcup_{y,e}S_{y,e}$.  We call any set $\bar{S}$ formed in this way an \emph{$(\bs{E},+)$-barrier} (with side function $w$).  
\end{definition}
Note that taking $k=w(y)$ in \eqref{funky} reveals that $w(y+w(y)e_1)=0$ for each $y \in \Z^d$.  Therefore (s1) above could be replaced by $S=\{x\in \Z^d:w(x)=0\}$.

\begin{remark}
\label{rem:F}
Note that whether or not a set of points $T\subset \Z^d$ is an $(\bs{E},+)$ barrier can be determined by observing $(\mc{G}_{z})_{z \in T}$.  It also does not depend on $\bs{F}$. Moreover, if $T$ is an $(\bs{E},+)$ barrier, then by definition, $T\subset\Omega_+$.
\end{remark}
On $\{ L_x\text{ is finite for every }x\in\Z^d\}$, define 
$
S_L=\{x+L_xe_1:x\in \Z^{d}\}. 
$
By Remark \ref{rem:L} we have $S_L=\{x \in \Z^d:L_x=0\}$.
\begin{lemma}
\label{lem:Wbarrier}
Assume Condition \ref{cond1}.  On $\{ L_x\text{ is finite for every }x\in\Z^d\}$, $\bar{S}_L$ is an $(\bs{E},+)$-barrier with side function $w(y)=L_y$ for each $y\in \Z^d$, and $w(o)\le 0$.
\end{lemma}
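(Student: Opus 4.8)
The plan is to verify, on the event $\{L_x \text{ finite for all } x\}$, the three defining properties (s1)--(s3) of Definition~\ref{def:barrier} for the set $S_L$ with the candidate side function $w = L$, and then to identify $\bar S_L$ with the set $\bar S$ constructed from that side function. The statement $w(o) \le 0$ will follow almost immediately from the path $P_1$ followed by $P_2$ built in Lemma~\ref{lem:Wfinite1}: that argument actually produces a point $o + k e_1 \in \mc{C}_o$ reachable from $o$, but more to the point, taking $x = o$ there (or simply noting $o \in \mc{C}_o$) gives $L_o \le 0$, since $o = o + 0\cdot e_1 \in \mc{C}_o$. Actually $L_o \le 0$ is immediate from $o \in \mc{C}_o$; I would just say that.

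For (s1): by Remark~\ref{rem:L}, $w = L$ satisfies the side-function identity \eqref{funky}, because $L_{y+ke_1} = L_y - k$, and indeed $S_L = \{x : L_x = 0\}$ was already observed. So (s1) holds by definition of $S_L$ once we know all the $L_x$ are finite (which is the hypothesis of the event we are working on). For (s2): I must show $S_L \subset \Omega_+$, i.e.\ that if $L_x = 0$ then $x \in \Omega_+$. Suppose instead $x \in \Omega_-$ with $L_x = 0$, so $x \in \mc{C}_o$. Under Condition~\ref{cond1}, $\uF \supset \mc{E}\setminus\uE$, and in particular $-e_1 \in \mc{E}\setminus\uE$ (since $\uE \subset \oE \subset \mc{E}_+$, so $-e_1 \notin \uE$), hence $-e_1 \in \mc{G}_x$; therefore $x - e_1 \in \mc{C}_o$, forcing $L_x \le -1$, a contradiction. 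This gives (s2).

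The substantive point is (s3). Fix $e \in \mc{E}\setminus\{\pm e_1\}$ and $y$ with $L_{y+e} > L_y$; I must show that for every $k \in [L_y, L_{y+e})$ we have $y + ke_1 \in \Omega_+$ and $e \notin \mc{G}_{y+ke_1}$. The idea: since $L_y = 0$ after translating (use Remark~\ref{rem:L} to reduce to $y \in S_L$, i.e.\ $L_y = 0$, so the range is $[0, L_{y+e})$ with $L_{y+e} \ge 1$), the point $y \in \mc{C}_o$. Walk up the $e_1$-line from $y$: I claim each $y + ke_1$ for $0 \le k < L_{y+e}$ lies in $\mc{C}_o \cap \Omega_+$ and has $e \notin \mc{G}_{y+ke_1}$. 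If some $y + ke_1$ in this range were in $\Omega_-$, then (as in (s2), using $-e_1 \in \mc{G}_{y+ke_1}$) we could not conclude anything upward, but rather we'd get $y + (k-1)e_1 \in \mc{C}_o$; the real mechanism is: if $y + ke_1 \in \Omega_+$ then $e_1 \in \mc{G}_{y+ke_1}$ (since $e_1 \in \uE \subset$ every $E_i$), so $y + (k+1)e_1 \in \mc{C}_o$; inductively, starting from $y \in \mc{C}_o$, as long as the visited sites are all in $\Omega_+$ we keep climbing and stay in $\mc{C}_o$. If one of them, say the first one $y + k_0 e_1$, were in $\Omega_-$, it would still be in $\mc{C}_o$, and then $-e_1 \in \mc{G}$ (harmless) but also, if $e \in \mc{G}_{y+k_0 e_1}$ we'd reach $y + k_0 e_1 + e \in \mc{C}_o$; and whenever $y + ke_1 \in \mc{C}_o$ with $e \in \mc{G}_{y+ke_1}$, we get $y + ke_1 + e \in \mc{C}_o$, i.e.\ $(y+e) + ke_1 \in \mc{C}_o$, forcing $L_{y+e} \le k < L_{y+e}$, a contradiction. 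So along the whole climb $e \notin \mc{G}$ at each site; and I still need each such site to be in $\Omega_+$ (not merely in $\mc{C}_o$) — this is where I must be careful: once we hit an $\Omega_-$ site $y + k_0 e_1 \in \mc{C}_o$, the climb above it is no longer automatic, so I need to argue that in fact the first $\Omega_-$ site cannot occur before index $L_{y+e}$. Here I would use the minimality in the definition of $L_{y+e}$ together with the fact that from an $\Omega_-$ site every $e' \in \mc{E}\setminus\uE$ is available (in particular $-e_j$ for $j \ne 1$), and carefully track that reaching $(y+e)$-line prematurely contradicts $L_{y+e} > L_y = 0$; this bookkeeping — showing that the run of sites $y, y+e_1, \dots, y + (L_{y+e}-1)e_1$ is forced to be entirely in $\Omega_+$ with $e$ unavailable — is the main obstacle, and I expect it to follow by a downward induction/extremality argument on $L_{y+e}$ exactly as in the $d=2$ analysis of \cite{DRE}.

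Finally, having checked (s1)--(s3) for $w = L$, the set $\bar S$ built in Definition~\ref{def:barrier} from the sets $S_{y,e}$ is by construction $S_L \cup \bigcup_{y,e} \{y + ke_1 : k \in [L_y, L_{y+e})\}$, and the (s3) verification above shows these are exactly the extra $\Omega_+$ sites forced along $e_1$-lines, so $\bar S = \bar S_L$; combined with $L_o \le 0$ this is the claim.
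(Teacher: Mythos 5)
Your verifications of (s1), (s2) and $w(o)\le 0$ match the paper's, but (s3) is exactly where the lemma has content, and there you stop short: you explicitly defer the claim that no site $y+ke_1$ with $k\in[L_y,L_{y+e})$ can lie in $\Omega_-$ to an unspecified ``downward induction/extremality argument as in the $d=2$ analysis''. That deferred step is the heart of the proof, and the gesture toward steps $-e_j$, $j\neq 1$, from $\Omega_-$ sites points in the wrong direction. The missing observation is short: first show that $L_{y+e}>L_y$ forces $e\notin\uE$ (if $e\in\uE$ then, since $y+L_ye_1\in S_L\subset\Omega_+$ by your (s2), the arrow $e$ is present there, so $y+e+L_ye_1\in\mc{C}_o$ and $L_{y+e}\le L_y$, a contradiction). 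Once $e\notin\uE$, Condition \ref{cond1} gives $e\in\uF$, i.e.\ the arrow $e$ is \emph{automatically} available at every $\Omega_-$ site. Now let $\hat k$ be the first $k\in[L_y,L_{y+e})$ with $y+ke_1\in\Omega_-$ (if any). Climbing from $y+L_ye_1\in\mc{C}_o$ by $e_1$-steps through the preceding $\Omega_+$ sites (using $e_1\in\uE$) shows $y+\hat ke_1\in\mc{C}_o$, and then $y+\hat ke_1+e\in\mc{C}_o$, giving $L_{y+e}\le\hat k<L_{y+e}$, a contradiction. Hence the whole run lies in $\Omega_+$, the climb shows each such site is in $\mc{C}_o$, and your own argument then rules out $e\in\mc{G}_{y+ke_1}$ for each $k$ in the range, completing (s3).

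So the structure of your attempt is the right one, and the conclusion about $\bar S=\bar S_L$ is fine, but as written the proof is incomplete: the step you flag as ``the main obstacle'' is not an obstacle requiring bookkeeping or induction at all, and leaving it unproved leaves (s3) unestablished.
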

\begin{proof}
Remark \ref{rem:L} shows that $x\mapsto L_x$ is a side function.  Thus (s1) holds.
Now let $z\in S_L$. We must have $z\in \Omega_+$, since if not then $z-e_1\in \mc{C}_o$, which would contradict the definition of $L_{z}$.  This verifies (s2).  

Turning to (s3), suppose that $L_{y+e}> L_{y}$ and $e\ne \pm e_1$. Then $e \notin \uE$ (since if it were, then $y+L_ye_1\in \mc{C}_o$ implies that $y+e+L_ye_1\in \mc{C}_o$, and therefore $L_y\le L_{y+e}$).  Suppose that $y+ke_1\in \Omega_-$ for some $k \in [L_y,\dots, L_{y+e})$, and let $\hat{k}$ be the first such $k$.  Then $y+\hat{k}e_1\in \mc{C}_o$, since $y+L_ye_1\in \mc{C}_o$ and $e_1\in \mc{G}_{y+je_1}$ for each $j\in [L_y,\hat{k})$.  Therefore $y+\hat{k}e_1+e\in \mc{C}_o$ (since $e\notin\uE$), so $L_{y+e}\le \hat{k}<L_{y+e}$, which is impossible.  Therefore $y+ke_1\in \Omega_+$ for each $k \in [L_y,\dots, L_{y+e})$.  

This also shows that $y+ke_1\in \mc{C}_o$ for each such $k$, so in fact $e \notin \mc{G}_{y+ke_1}$ for any such $k$ (otherwise $y+ke_1+e\in \mc{C}_o$ and hence $L_{y+e}\le k$, which is impossible).  This verifies (s3), confirming that $\bar{S}_L$ is an $(\bs{E},+)$ barrier.

Finally, we have $w(o)\le 0$ because $o\in\mc{C}_o\Rightarrow L_o\le 0$. 
\end{proof}

\begin{lemma}
\label{lem:try_this}
Assume Condition \ref{cond2}. Whenever $S\subset\Z^d$ satisfies (s1)--(s3) of Definition \ref{def:barrier}, with $w(o)\le 0$, it follows that $L_x\ge w(x)$ for every $x\in \Z^{d}$. Moreover $w(x)\le 0$ for every $x\in\mc{C}_o$.
\end{lemma}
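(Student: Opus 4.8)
The plan is to exhibit a forward-invariant region for the arrow dynamics and show that $\mc{C}_o$ is contained in it. Concretely, I would work with $A:=\{z\in\Z^d : w(z)\le 0\}$. First note that \eqref{funky} gives $w(x+ke_1)=w(x)-k$ for all $x\in\Z^d$, $k\in\Z$; hence $A$ meets each $e_1$-line $x+\Z e_1$ in the half-line $\{x+ke_1:k\ge w(x)\}$, whose lowest point is the point of $S$ on that line (recall $S=\{z:w(z)=0\}$). Since $w(o)\le 0$ we have $o\in A$, and $\mc{C}_o$ is exactly the set of vertices reachable from $o$ along arrows, so it suffices to prove that no arrow leaves $A$, i.e.\ that $z\in A$ and $e\in\mc{G}_z$ imply $z+e\in A$; then $\mc{C}_o\subseteq A$ follows by induction on the length of a directed path from $o$.

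For the invariance step I would split into three cases. If $e=e_1$, then $w(z+e_1)=w(z)-1\le 0$. If $e=-e_1$: under Condition \ref{cond2} an $\Omega_+$ site has $\mc{G}_z=E_i\subseteq\oE\subseteq\mc{E}_+$, which does not contain $-e_1$, so $-e_1\in\mc{G}_z$ forces $z\in\Omega_-$; then by (s2) $z\notin S$, hence $w(z)\ne 0$, hence $w(z)\le -1$ (as $z\in A$), and therefore $w(z-e_1)=w(z)+1\le 0$. Finally, if $e\ne\pm e_1$, suppose towards a contradiction that $w(z+e)\ge 1$. Then $w(z)\le 0<1\le w(z+e)$, so $w(z+e)>w(z)$ and $0\in[w(z),w(z+e))$; applying (s3) with $y=z$ and $k=0$ gives $z\in\Omega_+$ and $e\notin\mc{G}_z$, contradicting $e\in\mc{G}_z$. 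Hence $w(z+e)\le 0$ in every case.

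It then remains to read off the two conclusions. The inclusion $\mc{C}_o\subseteq A$ says precisely that $w(x)\le 0$ for every $x\in\mc{C}_o$. Moreover, if $x+ke_1\in\mc{C}_o$ then $x+ke_1\in A$, i.e.\ $w(x)-k=w(x+ke_1)\le 0$, so $k\ge w(x)$; taking the infimum over all such $k$ yields $L_x\ge w(x)$ (the inequality being vacuous when no such $k$ exists, under the convention $\inf\varnothing=+\infty$).

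I do not expect a real obstacle here: once the invariant region is correctly identified as $\{w\le 0\}$ rather than $S$ itself, the argument is a one-line induction. The only spots needing a little care are the $e=-e_1$ case, where one must combine (s2) with $\oE\subseteq\mc{E}_+$ to rule out $z$ lying exactly on the barrier, and the bookkeeping via \eqref{funky} ensuring that the three cases are exhaustive and that $w$ decreases along $e_1$.
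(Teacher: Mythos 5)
Your proof is correct and is essentially the paper's argument repackaged: the paper takes a self-avoiding path in $\mc{C}_o$ to a point with $w>0$ and derives a contradiction at the first step where $w$ becomes positive, which is exactly your forward-invariance induction for $A=\{w\le 0\}$, with the same three-case split ($e=e_1$, $e=-e_1$ via (s2) and $\oE\subset\mc{E}_+$, and $e\ne\pm e_1$ via (s3) with $k=0$). Both conclusions are then read off identically, so there is nothing to add.
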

\begin{proof}
Suppose it isn't true that all $L_x\ge w(x)$.  Then there exists some $x\in \Z^{d}$ and a $k<w(x)$ such that $z:=x+ke_1\in \mc{C}_o$.  Thus $w(z)=w(x)-k>0$. There exists a self-avoiding path $y_0,y_1,\dots,y_N$ in $\mc{C}_o$ from $y_0=o$ to $y_N=z$.  Let $z_1$ be the first location $y$ along this path at which $w(y)>0$.  Then $z_1\ne o$ since we've assumed that $w(o)\le 0$. Let $z_2$ denote the location immediately preceding $z_1$ along this path. Then $w(z_2)   \le 0$ and $z_1=z_2+e$ for some $e\ne e_1$.  

We cannot have $e=e_1$, since in that case $w(z_1)=w(z_2)-1<0< w(z_1)$, which is impossible. 

We cannot have $e=-e_1$ either.  If it were, then $w(z_2)-1=w(z_1)<0\le w(z_2)$, so in fact $w(z_2)=0$.  Thus $z_2\in S$, so by (s2) we have $z_2\in \Omega_+$. This implies that $-e_1\notin \mc{G}_{z_2}$ which is impossible, given the definition of $z_2$.

Therefore $e\ne \pm e_1$. We know that $w(z_1)> 0\ge w(z_2)$, or in other words, $0\in[w(z_2),w(z_2+e))$. By (s3) it follows that $z_2\in\Omega_+$ and $e\notin\mc{G}_{z_2}$. This is impossible, given the definition of $z_2$, which establishes that all $L_x\ge w(x)$.

The final conclusion now holds, because if $x\in\mc{C}_o$ then $w(x)\le L_x\le 0$.
\end{proof}

In the next argument, for simplicity, we will write $L_x$ and $L^*_x$ respectively for the objects $L_x(\bs{E},\bs{F},\bs{r},\bs{q},p)$ and $L_x(\bs{E},\bs{r},p)$ of Theorem \ref{thm:main1}. Recall that the former corresponds to a model satisfying Condition \ref{cond1}, and the latter to a corresponding model satisfying Condition \ref{cond2}. We will adopt the same shorthand for other quantities obtained from these model so that, for example, Theorem \ref{thm:main1} is the statement that $L_x=L_x^*$ and $(\mc{C}_o)_{\{1+\}}=\mc{C}_o^*$. 

\begin{proof}[\textbf{Proof of Theorem \ref{thm:main1}}]
If $p=1$ the claim is trivial, so assume $p\in(0,1)$. By \eqref{banana2},  $L^*_x\le L_x$ for every $x\in \Z^{d}$.  Thus if $L_x=-\infty$ for every $x$ then there is nothing to prove.

By Lemma \ref{lem:Wfinite2} we may therefore assume that $L_u>-\infty$ for every $u\in \Z^{d}$.  By Lemma \ref{lem:Wbarrier}, $S_L$ satisfies (s1)--(s3) and by definition, $L_o\le 0$.  By Lemma \ref{lem:try_this} we obtain that $L^*_x\ge L_x$ for every $x\in \Z^{d}$.

Now consider the second assertion. In one direction, the fact that $e_1\in \mc{G}^*_x$ for every $x$ implies that $(\mc{C}_o)_{\{1+\}}\subset (\mc{C}_o^*)_{\{1+\}}=\mc{C}_o^*$. In the other direction, let $x\in\mc{C}_o^*$. Then $L_x^*\le 0$ so by the first part of the Theorem, also $L_x\le 0$. This implies that $x\in(\mc{C}_o)_{\{1+\}}$, and we're done.
\end{proof}

Note that probability enters the above arguments only via Lemmas \ref{lem:Wfinite1} and \ref{lem:Wfinite2}. Outside of those results, the proofs are purely graph-theoretic. We cannot entirely eliminate probability however.  For example, setting $p=0$ in Example \ref{exa:orthant}) gives $(\mc{C}_o)_{\{1+\}}=\{(i,j): j\le 0\}$, whereas $\mc{C}_o^*=\Z^2$ in this case.

\begin{proof}[\textbf{Proof of Theorem \ref{thm:main2}}]
Assume Condition \ref{cond2} and that $p\in (0,1)$. 
Consider the following alternatives:
\begin{itemize}
\item[(i)] $L_x=-\infty$ for every $x \in \Z^d$;
\item[(ii)]  $L_x$ is finite for every $x \in \Z^d$.
\end{itemize}
Lemmas \ref{lem:Wfinite1} and \ref{lem:Wfinite2} show that the event that (i) or (ii) holds has probability one. 

Lemma \ref{lem:Wbarrier}  shows that in case (ii) 
there exists an $(\bs{E},+)$ barrier $\bar{S}_L$ with $w(o)\le 0$.  On the other hand, if there is an $(\bs{E},+)$ barrier $\bar{S}$ with $w(o)\le 0$ then Lemma \ref{lem:try_this} shows that $L_x\ge w(x)$ for every $x\in \Z^{d}$.  In other words, (ii) and the existence of an $(\bs{E},+)$ barrier with $w(o)\le 0$ are equivalent.

The event that there exists an $(\bs{E},+)$ barrier (somewhere) is translation invariant, and by ergodicity of the environment it follows that the probability that there exists an $(\bs{E},+)$ barrier is 0 or 1.  If it is 1, then it follows that the probability that there exists an $(\bs{E},+)$ barrier with $w(o)\le n$ increases to 1 as $n \to \infty$.  By translation invariance, this probability does not actually depend on $n$, hence almost surely there is such an $(\bs{E},+)$ barrier with $w(o)\le 0$.  

We have shown that for each $p\in (0,1)$, either (i) holds almost surely or (ii) holds almost surely.
In case (i), since $e_1$ is always $\in \mc{G}_x$ we have $\mc{C}_o(p)=\Z^d$.  In case (ii), clearly $\mc{C}_o(p)\neq\Z^d$.  Since $\mc{C}_o(p)$ is monotone decreasing in $p$ this proves the existence of a $p_c$ below which $\mc{C}_o(p)=\Z^d$, and above which $\mc{C}_o(p)\neq\Z^d$.

It remains only to show that $p_c\in (0,1)$.  The fact that $p_c>0$ follows because this model dominates $d$-dimensional site percolation (corresponding to setting $E_i=\varnothing$) with parameter $1-p$.  For $1-p$ site percolation the connected cluster of the origin contains infinitely many points of the form $-ke_1$ for $k\ge 0$ with positive probability when $1-p$ is larger than the critical probability $p_c^{\text{site}}<1$ of the model.  It is easy to show that $p_c<1$, by counting self-avoiding walks as in e.g.~\cite[proof of Theorem 4.2]{DRE}.
\end{proof}

  \section{Backward clusters. Proof of Theorem \ref{thm:B}}
  \label{sec:backwardclusters}
It is trivial that $\mc{B}_o$ is connected as a subset of the graph $\Z^d$.  In \cite{DRE,DRE2} for the planar case $d=2$ it is proved that under certain general conditions (implied by Condition \ref{cond1}), $\mc{B}_o$ is \emph{simply} connected (as a subset of the graph $\Z^d$).  This need not be the case (assuming only Condition \ref{cond1}) in higher dimensions as per the following example.
\begin{figure}[t]
\centering
\includegraphics[scale=.4]{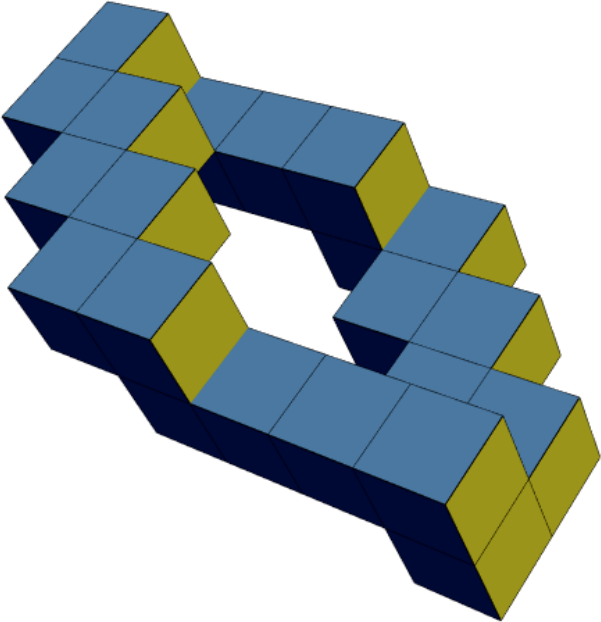}
\caption{A rotation of the irregular (finite) $\mathcal{B}_o$ in Example \ref{exa:funnyBo}.   In this example neither $\mc{B}_o$ nor $\mc{B}_o^c$ are simply connected.}
\label{fig:funnyB}
\end{figure}
\begin{example}
\label{exa:funnyBo}
Consider Example \ref{exa:orthant} in 3 dimensions with $p\in (0,1)$.  Then with positive probability neither $\mathcal{B}_o$ nor $\mathcal{B}_o^c$ is simply connected.  To be precise, in this model, with positive probability $\mc{B}_o$ is precisely the loop $(0,0,0)$, $(-1,0,0)$, $(-2,0,0)$, $(-3,0,0)$, $(-3,0,1)$, $(-3,-1,1)$, $(-2,-1,1)$, $(-2,-2,1)$, $(-1,-2,1)$, $(-1,-3,1)$, $(0,-3,1)$, $(0,-3,0)$, $(1,-3,0)$, $(2,-3,0)$, $(3,-3,0)$, $(3,-3,-1)$, $(3,-2,-1)$, $(2,-2,-1)$, $(2,-1,-1)$, $(1,-1,-1)$, $(1,0,-1)$, $(0,0,-1)$ and back to $(0,0,0)$.   See Figure \ref{fig:funnyB}.  

Here we specify whether various (finitely many) vertices $(x,y,z)\in \Z^3$ are in $\Omega_+$ or $\Omega_-$ (if the environment isn't specified, it isn't relevant for the example, and $^*$ indicates that the vertex is in $\mc{B}_o$).
\begin{itemize}
	\item The following vertices with $z=0$ are in $\Omega_-$:	
\begin{align*}
&(0,0,0)^*, (-1,0,0)^*, (-2,0,0)^*, (3,-3,0)^*,\\
& (0,-1,0), (-1,-1,0), (-2,-1,0),(-3,-1,0),(-4,0,0),(-2,-2,0),\\
&(-1,-2,0),(-1,-3,0),(0,-4,0),(1,-4,0),(2,-4,0),(3,-4,0).
\end{align*}
	\item The following vertices with $z=0$ are in $\Omega_+$:	
\begin{align*}	
&(-3,0,0), 0,-3,0)^*, (1,-3,0)^*, (2,-3,0)^*,\\
&(0,1,0),(1,0,0),(1,-1,0),(2,-1,0),(0,-2,0),(1,-2,0),(2,-2,0),\\
&(3,-2,0),(4,-3,0),(-1,1,0),(-2,1,0),(-3,1,0).
\end{align*}
	\item The following vertices with $z=-1$ are in $\Omega_-$:
	\begin{align*}
	&(1,0,-1)^*,(2,-1,-1)^*,(3,-2,-1)^*\\
	&(-3,0,-1),(-2,0,-1),(-1,0,-1),(0,-1,-1),(1,-2,-1),\\
	&(0,-3,-1),(1,-3,-1),(2,-3,-1),(3,-4,-1).
	\end{align*}	
\item The following vertices with $z=-1$ are in $\Omega_+$:
\begin{align*}
&(0,0,-1)^*,(1,-1,-1)^*,(2,-2,-1)^*,(3,-3,-1)^*\\
&(0,1,-1),(1,1,-1),(2,0,-1),(3,-1,-1),\\
&(4,-2,-1),(4,-3,-1).
\end{align*}	
\item The following vertices with $z=1$ are in $\Omega_-$:
\begin{align*}
&(-3,0,1)^*,(-2,-1,1)^*,(-1,-2,1)^*,(0,-3,1)^*,\\
&(-4,0,1),(-4,-1,1),(-3,-2,1),(-2,-3,1),(-1,-4,1),(0,-4,1),
\end{align*}
\item The following vertices with $z=1$ are in $\Omega_+$:
\begin{align*}
&  (-3,-1,1)^*,(-2,-2,1)^*,(-1,-3,1)^* \\
&(-3,1,1),(-2,0,1),(-1,0,1),(0,0,1),(-1,-1,1),\\
&(0,-2,1),(1,-3,1),(2,-3,1), (3,-3,1).
\end{align*}
\item For any vertex $(x,y,1)^*$ appearing above set $(x,y,2)\in \Omega_+$, and for any vertex $(x,y,-1)^*$ appearing above, set $(x,y,-2)\in \Omega_-$.
\end{itemize}
\end{example}

Recall \eqref{A1+} and define $z_{\{1-\}}=\{z-ke_1:k\in \Z_+\}$, and for $A\subset \Z^d$
\begin{equation}
A_{\{1-\}}=\bigcup_{z \in A}z_{\{1-\}}.\label{A1-}
\end{equation}

\begin{proof}[\textbf{Proof of Theorem \ref{thm:B}}]
By Condition \ref{cond2}, $e_1\in \mc{G}_z$ for every $z$.  Thus  $z_{\{1-\}}\subset \mc{B}_z$ for every $z\in \Z^d$. Hence if $z\in \mc{B}_o$ then $z_{\{1-\}}\subset \mc{B}_o$.  

Suppose that $z\in \mc{B}_o$ and let $e \ne \pm e_1$.  Since $p\in (0,1)$ we have that infinitely many points in $(z+e)_{\{1-\}}$ contain $-e$ almost surely, and all of these points are therefore in $\mc{B}_o$ as well.  This proves that $R_x>-\infty$ a.s. for every $x\in \Z^{d}$.  

If $R_{x}=\infty$ for some $x\in \Z^{d}$ then in fact $x+\Z e_1\subset \mc{B}_o$ and therefore for each $e\ne \pm e_1$ almost surely infinitely many points of the form $x+e+ke_1$ with $k\ge 0$ are in $\mc{B}_o$ as well. Therefore $R_{x+e}=\infty$ a.s. on $\{R_x=\infty\}$.  This proves that $\{\text{every $R_x$ is finite}\}\cup\{\text{every $R_x=\infty$}\}$ has probability one.

If $p>p_c$ then with probability 1 there exists an $(\bs{E},+)$ barrier with $w(o)> 0$.  Lemma \ref{lem:try_this} shows that if $z\in \Z^{d}$ satisfies $w(z)\le 0$,  then every $x\in\mc{C}_z$ will satisfy $w(x)\le 0$. This implies that $o\notin\mc{C}_z$. Thus no such $z$ can lie in $\mc{B}_o$, and therefore $R_x<w(x)$. So $R_x$ is finite for every $x \in \Z^{d}$.  

On the other hand, suppose $p<p_c$. If $(R_x)_{x\in \Z^{d}}$ are all finite then let $S=\{x+(R_x+1)e_1:x\in \Z^{d}\}$.  We claim that $\bar{S}$ is an $(\bs{E},+)$ barrier, with $w(x)=R_x+1$ and $w(o)>0$.  But that is a contradiction, since no such barrier exists when $p<p_c$. So in fact, all the $R_x$ will be infinite.  Therefore it only remains to prove that $\bar{S}$ is an $(\bs{E},+)$ barrier.

To prove this, note first that for each $y\in x+\Z e_1$ and $k \in \Z$ we have by definition that $R_{y+ke_1}=R_y-k$.  Therefore $w(x):=R_x+1$ is side function, so (s1) is satisfied.   Next, $S\subset \Omega_+$ since for any $z\in S$, $z-e_1\in \mc{B}_o$ but $z\notin \mc{B}_o$ so $-e_1\notin \mc{G}_z$.  So (s2) holds. Finally, suppose $e \ne \pm e_1$ and $w(y+e)>w(y)$. We know that $(y+e)+ke_1\in \mc{B}_o$ for every $k<w(y+e)$, while $y+ke_1\notin\mc{B}_o$ for $k\ge w(y)$.  This implies that $e\notin \mc{G}_{y+ke_1}$ for any $k \in [w(y),w(y+e))$. Therefore also $y+ke_1\in \Omega_+$ for such $k$, which shows (s3).
\end{proof}

Before we prove Proposition \ref{formofBo} we will state and prove several Lemmas that together will imply the proposition.   For $y \in \Z^d$ and $k_1,k_2\in \Z$ with $k_1<k_2$, let 
$y[k_1,k_2]=\{y+ke_1: k \in [k_1,k_2]\}$.
\begin{lemma}
	\label{lem:segment}
	Suppose that for some $y\in \Z^d$, $k_1<k_2\in \Z$ and $e \in \mc{E}\setminus\{\pm e_1\}$ we have 
	$y[k_1,k_2]\subset \mc{B}_o$ and $\{y+k_1e_1+e,y+k_2e_1+e\}\subset \mc{B}_o$.  Then 
	$(y+e)[k_1,k_2]\subset \mc{B}_o$.
\end{lemma}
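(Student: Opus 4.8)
The plan is to argue by contradiction, exploiting the arrows that Condition \ref{cond1} guarantees are present at every site. Write $v_k=y+ke_1$ and $w_k=y+ke_1+e$ for $k_1\le k\le k_2$, so the hypotheses say $v_k\in\mc{B}_o$ for all $k\in[k_1,k_2]$ while $w_{k_1},w_{k_2}\in\mc{B}_o$, and the goal is $w_k\in\mc{B}_o$ for every such $k$. Suppose some $w_k\notin\mc{B}_o$. The key observation is that every site $z$ carries either the arrow $e_1$ (if $z\in\Omega_+$, since $e_1\in\uE\subset E_i$) or the arrow $-e_1$ (if $z\in\Omega_-$, since $-e_1\in\mc{E}\setminus\uE\subset F_j$ for every $j$); hence if $w_m\notin\mc{B}_o$ then either $w_m\in\Omega_+$ and $w_{m+1}=w_m+e_1\notin\mc{B}_o$, or $w_m\in\Omega_-$ and $w_{m-1}=w_m-e_1\notin\mc{B}_o$. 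So the property ``not in $\mc{B}_o$'' propagates along the line $y+e+\Z e_1$ in one of the two axis directions.

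To pin down which direction, I would split on whether $-e\in\uE$. If $-e\notin\uE$, then $-e\in\mc{E}\setminus\uE\subset F_j$ for every $j$, so any $\Omega_-$ site $w_m$ carries the arrow $-e$ to $w_m-e=v_m\in\mc{B}_o$; thus $w_m\notin\mc{B}_o$ forces $w_m\in\Omega_+$, and the observation above pushes the defect up to $w_{m+1}$. Starting from the bad index $k$ and iterating (the set $[k_1,k_2]$ is finite, so this terminates) yields $w_{k_2}\notin\mc{B}_o$, contradicting $w_{k_2}\in\mc{B}_o$. If instead $-e\in\uE$, then $-e\in\uE\subset E_i$ for every $i$, so any $\Omega_+$ site $w_m$ carries the arrow $-e$ to $v_m\in\mc{B}_o$; hence $w_m\notin\mc{B}_o$ forces $w_m\in\Omega_-$, and then the $-e_1$ arrow pushes the defect down to $w_{m-1}$. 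Iterating down to $k_1$ contradicts $w_{k_1}\in\mc{B}_o$. Since the two cases are exhaustive, this proves the lemma.

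The only real subtlety I anticipate is the bookkeeping in the case $-e\in\uE$: there one must use that $\uE$ is contained in \emph{every} $E_i$ (not merely that $e_1\in\uE$), together with the fact that every $\Omega_-$ site carries $-e_1$; this is precisely the point where the full strength of Condition \ref{cond1} --- its constraints on both $\bs{E}$ and $\bs{F}$ --- is used, and it is what makes the choice of propagation direction correct. As with the forward-cluster arguments of Section \ref{sec:forward clusters}, the proof is purely graph-theoretic given the environment and uses no probability; the endpoints $w_{k_1}$ and $w_{k_2}$ serve only to terminate the respective inductions.
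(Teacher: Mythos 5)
Your proof is correct and follows essentially the same route as the paper: the same case split on whether $-e\in\uE$ or $-e\in\uF$, using the $-e$ arrow to place the $\Omega_+$ (resp.\ $\Omega_-$) sites of the segment in $\mc{B}_o$ and the guaranteed $e_1$ (resp.\ $-e_1$) arrows to handle the rest; you merely phrase the last step as propagating a defect to an endpoint by contradiction, where the paper argues directly. No gaps.
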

\begin{proof}
Either $-e \in \underline{E}$ or $-e\in \underline{F}$.  In the first case we have that $(y+e)[k_1,k_2]\cap \Omega_+\subset \mc{B}_o$, and since $-e_1\in \underline{F}$ it then follows that $(y+e)[k_1,k_2]\subset \mc{B}_o$.  In the second case we have that $(y+e)[k_1,k_2]\cap \Omega_-\subset \mc{B}_o$, and since $e_1\in \Omega_+$ it follows that $(y+e)[k_1,k_2]\subset \mc{B}_o$.
\end{proof}

For $y \in \Z^d$, let $y[\Z]=y+\Z e_1$, $y[\Z_+]=y+\Z_+ e_1$, and $y[\Z_-]=y+\Z_- e_1$.
\begin{lemma}
	\label{lem:Zline}
Almost surely, if there exists $y \in \Z^d$ such that $y[\Z]\subset \mc{B}_o$ then $\mc{B}_o=\Z^d$.
\end{lemma}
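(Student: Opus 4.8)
The plan is to show that if some entire line $y[\Z]\subset \mc{B}_o$, then we can propagate this property to every neighbouring line, and hence (by iterating over $\Z^d$) conclude $\mc{B}_o=\Z^d$. The heart of the matter is to upgrade "one full line is in $\mc{B}_o$" to "every adjacent full line is in $\mc{B}_o$," using the probabilistic input that along any line, both $\Omega_+$ and $\Omega_-$ sites occur infinitely often in each direction (since $p\in(0,1)$), together with Lemma \ref{lem:segment}.

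First I would fix $y$ with $y[\Z]\subset \mc{B}_o$ and fix $e\in\mc{E}\setminus\{\pm e_1\}$; I claim $(y+e)[\Z]\subset\mc{B}_o$ almost surely. To see this, I would first establish that $(y+e)[\Z]\cap\mc{B}_o$ is nonempty and in fact contains points with arbitrarily large and arbitrarily small first coordinate. Indeed, along the line $(y+e)[\Z]$, almost surely there are infinitely many points $z$ with $-e\in\mc{G}_z$: whether this is forced depends on whether $-e\in\uE$ or $-e\in\uF$ (one of the two must hold since $\mc{E}=\uE\cup(\mc{E}\setminus\uE)$ and $\mc{E}\setminus\uE\subset\uF$ by Condition \ref{cond1}); in the first case any $z\in\Omega_+$ on that line has $-e\in\mc{G}_z$, in the second case any $z\in\Omega_-$ does, and both $\Omega_+$ and $\Omega_-$ occur infinitely often along the line since $p\in(0,1)$. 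Each such $z$ satisfies $z+e\in y[\Z]\subset\mc{B}_o$, hence $z\in\mc{B}_o$. So $(y+e)[\Z]\cap\mc{B}_o$ contains points $y+e+k_1e_1$ and $y+e+k_2e_1$ with $k_1$ arbitrarily negative and $k_2$ arbitrarily positive. Now apply Lemma \ref{lem:segment} with the segment $y[k_1,k_2]\subset y[\Z]\subset\mc{B}_o$ and the endpoints $y+k_1e_1+e,\,y+k_2e_1+e\in\mc{B}_o$ (after swapping the roles of $e$ and its corresponding neighbour appropriately — Lemma \ref{lem:segment} is stated so that $y[k_1,k_2]$ is a segment on one line and the endpoints are shifted by $e$; here I take the base line to be $y[\Z]$ so the conclusion gives $(y+e)[k_1,k_2]\subset\mc{B}_o$). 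Letting $k_1\to-\infty$ and $k_2\to\infty$ gives $(y+e)[\Z]\subset\mc{B}_o$.

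Having shown that "$y[\Z]\subset\mc{B}_o$" implies "$(y+e)[\Z]\subset\mc{B}_o$ a.s." for each of the finitely many directions $e\in\mc{E}\setminus\{\pm e_1\}$, I would iterate: any $u\in\Z^d$ lies on a line $u[\Z]$ that is reachable from $y[\Z]$ by a finite sequence of steps in directions orthogonal to $e_1$, so a union-bound over countably many such events (each of full probability) shows that almost surely $u[\Z]\subset\mc{B}_o$ for every $u\in\Z^d$, i.e. $\mc{B}_o=\Z^d$.

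I expect the only real subtlety to be bookkeeping in the application of Lemma \ref{lem:segment}: one must make sure the segment lies on a single $e_1$-line that is already fully inside $\mc{B}_o$ and that the two shifted endpoints are genuinely in $\mc{B}_o$, and then let the segment grow to exhaust the whole line. The probabilistic ingredient (infinitely many sites of each type along every line, hence infinitely many sites carrying the needed arrow $-e$) is routine given $p\in(0,1)$ and the i.i.d.~structure, and the case split on $-e\in\uE$ versus $-e\in\uF$ is exactly the dichotomy already exploited in Lemma \ref{lem:segment} itself. No serious obstacle is anticipated beyond getting these directions and indices lined up correctly.
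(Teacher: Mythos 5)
Your proposal is correct and follows essentially the same route as the paper: note that since $p\in(0,1)$ and $-e\in\uE$ or $-e\in\uF$, infinitely many sites on $(y+e)[\Z]$ in each direction carry the arrow $-e$ back to $y[\Z]\subset\mc{B}_o$ and hence lie in $\mc{B}_o$, then invoke Lemma \ref{lem:segment} to fill in the whole line and iterate over directions $e\in\mc{E}\setminus\{\pm e_1\}$. The only blemish is a harmless index slip: for $z\in(y+e)[\Z]$ with $-e\in\mc{G}_z$ the relevant neighbour is $z-e\in y[\Z]$ (not $z+e$), which is what forces $z\in\mc{B}_o$.
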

\begin{proof}
Let $e \in \mc{E}\setminus\{-e_1,e_1\}$.  It suffices to show that $(y+e)[\Z]\subset \mc{B}_o$.  

Either $-e\in \underline{E}$ or $-e\in \underline{F}$.  Since $p\in (0,1)$ we have that infinitely many points $z$ in $(y+e)[\Z_+]$ have $-e\in\mc{G}_z$ and likewise, infinitely many points $z$ in $(y+e)[\Z_-]$ have $-e\in\mc{G}_z$. Each such point is therefore in $\mc{B}_o$.  It follows from Lemma \ref{lem:segment} that $(y+e)[\Z]\subset \mc{B}_o$ as claimed.
\end{proof}

\begin{lemma}
	\label{lem:goat1} 
Suppose that there exists $y \in \Z^d$ such that  $y[\Z_-]\subset \mc{B}_o$, but $y[\Z]\not\subset \mc{B}_o$.  Then almost surely for every $x\in \Z^d$ there exists $K_x\in \Z$ such that $\mc{B}_o\cap x[\Z]=(x+K_x)[\Z_-]$.
\end{lemma}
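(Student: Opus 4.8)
We work on the event $H$ that a suitable $y$ exists, and fix one such $y$. I would first record what Condition~\ref{cond1} forces about $\mc{B}_o$ along $e_1$-lines. Since $e_1\in\uE$ and $\mc{E}\setminus\uE\subseteq\uF$, every $\Omega_+$ site emits the arrow $e_1$ (indeed every arrow of $\uE$), and every $\Omega_-$ site emits $-e_1$ (indeed every arrow of $\mc{E}\setminus\uE$). Using $z\in\mc{B}_o\iff o\in\mc{C}_z$ together with the fact that $z\notin\mc{B}_o$ forces $\mc{C}_z\subseteq\mc{B}_o^c$, this yields the one-sided monotonicities: if $w\in\Omega_+$ and $w+e_1\in\mc{B}_o$ then $w\in\mc{B}_o$; if $w\in\Omega_-$ and $w-e_1\in\mc{B}_o$ then $w\in\mc{B}_o$; dually $w\in\Omega_+\cap\mc{B}_o^c\Rightarrow w+e_1\in\mc{B}_o^c$ and $w\in\Omega_-\cap\mc{B}_o^c\Rightarrow w-e_1\in\mc{B}_o^c$. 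Consequently, along any line $x[\Z]$ each maximal run of $\mc{B}_o$ is all of $\Z$, a lower ray, an upper ray, or a finite interval; moreover the top of a maximal $\mc{B}_o^c$-run that is bounded above is an $\Omega_-$ site, and the bottom of one bounded below is an $\Omega_+$ site.

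\emph{Step 1: a lower ray of $\mc{B}_o$ on every line.} For $e\neq\pm e_1$, exactly one of $-e\in\uE$, $-e\notin\uE$ holds; in the first case $-e$ is a guaranteed arrow from every $\Omega_+$ site, in the second from every $\Omega_-$ site (as $-e\in\mc{E}\setminus\uE\subseteq\uF$). Since $p\in(0,1)$, a.s.\ for every line the relevant environment type occurs at infinitely many points of its $\Z_-$-ray, so a.s.\ infinitely many points $z$ of $(y+e)[\Z_-]$ satisfy $-e\in\mc{G}_z$; each such $z$ points to a site of $y[\Z_-]\subseteq\mc{B}_o$, hence $z\in\mc{B}_o$. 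Between two consecutive such points, the corresponding segment of $y[\Z]$ lies in $\mc{B}_o$ with both $e$-translated endpoints in $\mc{B}_o$, so Lemma~\ref{lem:segment} fills in the intervening segment of $(y+e)[\Z]$; thus $(y+e)[\Z]$ contains a lower ray of $\mc{B}_o$. Since $y[\Z]\not\subseteq\mc{B}_o$ gives $\mc{B}_o\neq\Z^d$, Lemma~\ref{lem:Zline} shows no line is entirely contained in $\mc{B}_o$. Iterating the previous step along the lattice of $e_1$-lines, which is connected (isomorphic to $\Z^{d-1}$), we conclude that a.s.\ every line $x[\Z]$ contains a lower ray of $\mc{B}_o$ yet is not fully contained in $\mc{B}_o$; in particular its bottom-most maximal $\mc{B}_o$-run is a lower ray, namely $\{x+ke_1:k\le b_x\}$ for some $b_x\in\Z$.

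\emph{Step 2 (the crux): no further runs.} It remains to show that $\mc{B}_o\cap x[\Z]$ is exactly this bottom run for every $x$; equivalently $\mc{B}_o=(\mc{B}_o)_{\{1-\}}$, i.e.\ $\mc{B}_o$ is closed under $-e_1$ along every line. This is genuinely the hard point, since closedness under $-e_1$ is not automatic under Condition~\ref{cond1} alone (it fails, e.g., in the ``upper-ray'' situation $\mc{B}_o=(\mc{B}_o)_{\{1+\}}\neq\Z^d$), so the hypothesis $y[\Z_-]\subseteq\mc{B}_o$ must be used essentially. My plan is a contradiction argument: if some line carried a maximal $\mc{B}_o^c$-run that is bounded above, its top would be a site $z\in\Omega_-\cap\mc{B}_o^c$ with $z+e_1\in\mc{B}_o$, and $\mc{C}_z\subseteq\mc{B}_o^c$ would be an infinite forward cluster that, by Step~1, lies strictly above the lower ray of $\mc{B}_o$ on every line it meets. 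Using that a forward cluster under Condition~\ref{cond1} contains an infinite self-avoiding path nondecreasing in the $e_1$-coordinate and nonincreasing in the $e_2$-coordinate (the observation recorded after Condition~\ref{cond1}), one propagates the ``upper boundary'' of this trapped cluster sideways — feeding boundary points into Lemma~\ref{lem:segment} as in Step~1 — until it reaches the line $y[\Z]$, which would place a point of $y[\Z_-]$ in $\mc{B}_o^c$, contradicting the hypothesis. An alternative route is to pass to a two-dimensional slice $Z^i(z)$ and invoke the planar structure theory (cf.\ Proposition~3.8 of \cite{DRE}), which forces the slice of $\mc{B}_o$ there to be the region below a decreasing function and hence to have only lower-ray line-slices. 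Either way, the real obstacle is controlling this sideways propagation so that it is guaranteed to reach, and conflict with, the line $y[\Z]$.

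Granting Step~2, $\mc{B}_o\cap x[\Z]=\{x+ke_1:k\le b_x\}=(x+b_xe_1)[\Z_-]$ for every $x$, so $K_x=b_x$ works. All exceptional events invoked above are countable unions over the $e_1$-lines, hence hold simultaneously almost surely on $H$.
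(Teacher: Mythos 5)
Your Step 1 is correct and is essentially identical to the first paragraph of the paper's proof (infinitely many sites of $(y+e)[\Z_-]$ carry the arrow $-e$, Lemma~\ref{lem:segment} fills the gaps, iterate over the $\Z^{d-1}$ lattice of lines, and Lemma~\ref{lem:Zline} rules out a full line). But Step 2, which you yourself identify as the crux, is not a proof: you only outline a contradiction scheme (propagate the trapped cluster $\mc{C}_z\subset\mc{B}_o^c$ of a gap-top site sideways until it meets $y[\Z]$) and you concede that you cannot control this propagation. The scheme has real problems beyond bookkeeping: at $\Omega_+$ sites the only guaranteed arrows lie in $\uE\subset\mc{E}_+$, so the trapped cluster cannot be steered towards an arbitrarily prescribed line, and even if it reached $y[\Z]$ this is only contradictory if it reaches it at or below $y$, which nothing forces. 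The fallback via two-dimensional slices and Proposition~3.8 of \cite{DRE} also does not apply as stated: $Z^i(z)\cap\mc{B}_o$ is not the backward cluster of a planar degenerate environment (paths to $o$ may leave the slice), so the planar classification of backward clusters cannot simply be invoked.

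The missing step is in fact deterministic once Step 1 is in place, and it is what the paper extracts from Lemma~\ref{lem:segment} (its write-up is a terse one-liner, but this is the content). Let $\mc{B}'=\{z:z[\Z_-]\subset\mc{B}_o\}$; by Step 1 every line meets $\mc{B}'$ in a lower ray with a finite top. Claim: if $w$ and $w+e$ are adjacent points of $\mc{B}_o$, then $w\in\mc{B}'$ if and only if $w+e\in\mc{B}'$. For $e=\pm e_1$ this is immediate. For $e\neq\pm e_1$, to pass from $w+e\in\mc{B}'$ to $w\in\mc{B}'$ apply Lemma~\ref{lem:segment} with base line $(w+e)[\Z]$ and direction $-e$, taking as base segment the portion of $(w+e)[\Z_-]$ between the height of the Step-1 ray top of the line $w[\Z]$ and the height of $w$; the two $(-e)$-translated endpoints are the ray top of $w[\Z]$ (in $\mc{B}_o$ by Step 1) and $w$ itself (in $\mc{B}_o$ by assumption), so the whole intervening segment of $w[\Z]$ lies in $\mc{B}_o$ and hence $w[\Z_-]\subset\mc{B}_o$. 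The reverse direction is symmetric, anchoring instead at the ray top of $(w+e)[\Z]$. Since every $z\in\mc{B}_o$ is joined to $o$ by an arrow path all of whose vertices lie in $\mc{B}_o$, and so is $y$, the set $\mc{B}_o$ is connected as a nearest-neighbour graph containing both $y$ and $z$; as $y\in\mc{B}'$ by hypothesis, the transfer claim propagates membership in $\mc{B}'$ to every point of $\mc{B}_o$. Thus $\mc{B}_o=\mc{B}'$, i.e.\ $\mc{B}_o\cap x[\Z]=(x+K_x)[\Z_-]$ for every $x$, with no probabilistic input beyond Step 1. Without some argument of this kind your proposal is incomplete precisely at the point where the lemma's content lies.
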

\begin{proof}
Let $e \in \mc{E}\setminus \{\pm e_1\}$.  Since $y[\Z_-]\subset \mc{B}_o$ and $-e\in \underline{E}\cup \underline{F}$ we have that infinitely many points in $(y+e)[\Z_-]$ are also in $\mc{B}_o$. It follows from Lemma \ref{lem:segment} that $(y+e+ke_1)[\Z_-]\subset \mc{B}_o$ for some $k\in \Z$.  Repeating this argument as needed proves that for every $x\in \Z^d$ there exists $k_x\in \Z$ such that $(x+k_xe_1)[\Z_-]\subset \mc{B}_o$.  Since $y[\Z]\not\subset \mc{B}_o$,  Lemma \ref{lem:Zline} tells us that there is a largest such $k_x$, which we denote by $K_x$.  

If there was any $k>K_x$ such that $x+ke_1\in\mc{B}_o$ then Lemma \ref{lem:segment} would imply that $(x+ke_1)[\Z_-]\subset \mc{B}_o$. This would contradict the definition of $K_x$, so in fact $\mc{B}_o\cap x[\Z]=(x+K_x)[\Z_-]$.


\end{proof}
The proof of the following is similar, and is left to the reader.
\begin{lemma}
	\label{lem:goat2}
	Suppose that there exists $y \in \Z^d$ such that  $y[\Z_+]\subset \mc{B}_o$ but $y[\Z]\not \subset \mc{B}_o$.  Then almost surely for every $x\in \Z^d$ there exists $K'_x\in \Z$ such that $\mc{B}_o\cap x[\Z]=(x+K'_x)[\Z_+]$.
\end{lemma}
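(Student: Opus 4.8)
The plan is to mirror the proof of Lemma \ref{lem:goat1} with the roles of $\Z_+$ and $\Z_-$ interchanged, relying on the same two ingredients: Lemma \ref{lem:segment} (to propagate a half-line in $\mc{B}_o$ to a neighbouring $e_1$-line, possibly after an $e_1$-shift) and Lemma \ref{lem:Zline} (to rule out a full line $x[\Z]\subset\mc{B}_o$, which would force $\mc{B}_o=\Z^d$ and contradict $y[\Z]\not\subset\mc{B}_o$). First I would fix $e\in\mc{E}\setminus\{\pm e_1\}$ and observe that $-e\in\uE\cup\uF$, so that whichever of $\Omega_+,\Omega_-$ the relevant sites land in, the arrow in direction $-e$ is available at infinitely many points of $(y+e)[\Z_+]$ (using $p\in(0,1)$). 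Each such point is then in $\mc{B}_o$, and together with $y[\Z_+]\subset\mc{B}_o$, Lemma \ref{lem:segment} applied to ever-longer segments $y[k_1,k_2]$ with $k_1$ fixed and $k_2\to\infty$ shows that $(y+e+ke_1)[\Z_+]\subset\mc{B}_o$ for some $k\in\Z$ (the shift $k$ accounts for the lowest available $-e$ site on the neighbouring line).

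Next I would iterate this neighbour-passing argument along a path in $\Z^{d-1}$ (the coordinates orthogonal to $e_1$) from $y$ to an arbitrary $x$, concluding that for every $x\in\Z^d$ there is some $k_x\in\Z$ with $(x+k_xe_1)[\Z_+]\subset\mc{B}_o$. Then, since $y[\Z]\not\subset\mc{B}_o$, Lemma \ref{lem:Zline} forbids any line from being fully contained in $\mc{B}_o$, so in particular there is a \emph{smallest} such $k_x$; call it $K'_x$. Finally I would check that this is exactly the description claimed: if some $k<K'_x$ had $x+ke_1\in\mc{B}_o$, then applying Lemma \ref{lem:segment} (now with $k_2$ fixed large and $k_1=k$, or more simply noting $x+ke_1\in\mc{B}_o$ together with the half-line above it being in $\mc{B}_o$) would give $(x+ke_1)[\Z_+]\subset\mc{B}_o$, contradicting minimality of $K'_x$. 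Hence $\mc{B}_o\cap x[\Z]=(x+K'_x)[\Z_+]$.

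The one point requiring slight care — and the only place the $\Z_+$ versus $\Z_-$ asymmetry could bite — is the direction in which Lemma \ref{lem:segment} is applied when transferring a $\Z_+$ half-line: in Lemma \ref{lem:goat1} one uses the \emph{left} endpoint $k_1$ fixed and grows $k_2$ downward was unnecessary since $\Z_-$ is already ``closed below''; here one instead fixes the \emph{bottom} endpoint and grows upward, so one must ensure there is a well-defined bottom, which is why one takes the \emph{smallest} $k_x$ rather than the largest. This is routine but worth stating explicitly. I do not expect any genuine obstacle; the lemma is deliberately set up to be symmetric to Lemma \ref{lem:goat1}, which is why the excerpt leaves it ``to the reader.''
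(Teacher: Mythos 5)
Your proposal is correct and is essentially the argument the paper intends: it omits this proof as ``similar'' to Lemma \ref{lem:goat1}, and you reproduce that proof with $\Z_-$ replaced by $\Z_+$ (propagate the half-line to neighbouring lines via the $-e\in\uE\cup\uF$ observation and Lemma \ref{lem:segment}, rule out bi-infinite lines via Lemma \ref{lem:Zline} and the hypothesis $y[\Z]\not\subset\mc{B}_o$, take the \emph{smallest} admissible shift $K'_x$, and invoke Lemma \ref{lem:segment} again to show no point below $K'_x$ on the line lies in $\mc{B}_o$). One small caution: your parenthetical alternative ``more simply noting $x+ke_1\in\mc{B}_o$ together with the half-line above it being in $\mc{B}_o$'' is not by itself enough, since an $\Omega_+$ site (pointing up) at the bottom of a putative gap and an $\Omega_-$ site (pointing down) at its top are locally consistent with the gap avoiding $\mc{B}_o$; so the maximality step should rest on the Lemma \ref{lem:segment} mechanism, exactly as in the paper's proof of Lemma \ref{lem:goat1}.
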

Obviously, under the assumptions of Lemma \ref{lem:goat1} we have $K_{x+e_1}=K_x-1$, hence $w(x):=K_x+1$ satisfies \eqref{funky}.  Similarly, in Lemma \ref{lem:goat2} we have $K'_{x+e_1}=K'_x-1$.

Recall the notation $Z^i(z)$ and $\mc{B}^i_o(z)$ given prior to the statement of Proposition \ref{formofBo}.  
\begin{lemma}
	\label{lem:intervals}
Let $i\ne 1$ and $z\in \Z^d$.  Then the set $Z^i(z)\setminus \mc{B}_o$ is connected. 

Suppose that $B$ is an infinite connected component of $\mc{B}_o^i(z)$.  Then for any $z'\in Z^i(z)$ the set
\begin{equation}
I=I(z'):=z'[\Z]\setminus B,
	\end{equation}
	is  a single interval (which is possibly empty or infinite, but not bi-infinite).
\end{lemma}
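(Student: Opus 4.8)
The plan is to reduce both claims to the structure of the connected components of $\mc{A}:=\mc{B}_o\cap Z^i(z)$; throughout I write a point of $Z^i(z)$ as a pair $(a,b)$ recording its $e_1$- and $e_i$-coordinates. Two consequences of Condition \ref{cond1} for the backward cluster will be used repeatedly: for $y\ne o$ with $y\notin\mc{B}_o$, if $y\in\Omega_+$ then $y+e_1\notin\mc{B}_o$ (as $e_1\in\uE\subset\mc{G}_y$), and if $y\in\Omega_-$ then $y-e_1\notin\mc{B}_o$ and $y-e_i\notin\mc{B}_o$ (as $-e_1,-e_i\in\mc{E}\setminus\uE\subset\uF\subset\mc{G}_y$); dually $\mc{B}_o$ is closed under the reverse implications. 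The same availability ($e_1$ at $\Omega_+$ sites, $-e_i$ at $\Omega_-$ sites) shows that from every $x$ the cluster $\mc{C}_x$ contains an infinite self-avoiding path built only from the steps $e_1$ and $-e_i$; such a path lies in $Z^i(x)$, and if $x\notin\mc{B}_o$ then, as no vertex of $\mc{C}_x$ lies in $\mc{B}_o$, it lies in $Z^i(x)\setminus\mc{B}_o$.

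\emph{Each component of $\mc{A}$ meets every line $z'[\Z]$ in an interval.} Suppose $u,v$ lie on a common line $z'[\Z]$ and in a common component of $\mc{A}$, and let $\gamma$ be a shortest path in $\mc{A}$ joining them; argue by induction on its length. If $\gamma$ leaves $z'[\Z]$, it has a maximal excursion off this line, say on the $+e_i$ side, running from a vertex $(a',b)$ up to $(a',b+1)$, staying at $e_i$-coordinate $>b$, and returning through $(a'',b+1)$ to $(a'',b)$. The interior $(a',b+1)\to\cdots\to(a'',b+1)$ is a strictly shorter path joining two points of a common line, so by the inductive hypothesis the $e_1$-segment between $(a',b+1)$ and $(a'',b+1)$ lies in $\mc{B}_o$; Lemma \ref{lem:segment} with $e=-e_i$ then puts the $e_1$-segment between $(a',b)$ and $(a'',b)$ in $\mc{B}_o$, and replacing the excursion by that straight segment contradicts minimality of $\gamma$ (an excursion on the $-e_i$ side uses $e=+e_i$). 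Hence $\gamma\subset z'[\Z]$, proving the claim; in particular $B\cap z'[\Z]$ is an interval (possibly empty, finite, semi-infinite, or all of $\Z$) for every component $B$.

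\emph{Part (ii).} Let $B$ be an infinite component of $\mc{B}_o^i(z)$. If $\mc{B}_o=\Z^d$ then $B=Z^i(z)$ and each $I(z')=\varnothing$; otherwise Lemma \ref{lem:Zline} shows that no line lies in $\mc{B}_o$, so $B\cap z'[\Z]\ne\Z$ for every $z'$. If $B\cap z'_0[\Z]$ is semi-infinite for some $z'_0$, say leftward, then some $y$ has $y[\Z_-]\subset\mc{B}_o$ but $y[\Z]\not\subset\mc{B}_o$, so by Lemma \ref{lem:goat1} every $\mc{B}_o\cap x[\Z]$ is a non-empty leftward half-line; on adjacent horizontal lines of $Z^i(z)$ these share a column, so $\mc{A}$ is connected and equals $B$, each $B\cap z'[\Z]$ is a non-empty proper leftward half-line, and $I(z')$ is the complementary (rightward) half-line, which is a single interval and not all of $\Z$; the rightward case uses Lemma \ref{lem:goat2}. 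It remains to exclude an infinite component $B$ none of whose horizontal slices is semi-infinite, so that by the interval property each such slice is a finite interval or empty. Then $B$ is unbounded in the $e_i$-direction, say upward; by connectedness, for each $b\ge b_0$ it meets the line at $e_i$-coordinate $b$ in a non-empty finite interval $[l_b,r_b]$ (in $e_1$-coordinate), and consecutive intervals $[l_b,r_b],[l_{b-1},r_{b-1}]$ overlap. The point $(l_b-1,b)$ is not in $\mc{B}_o$ (else it would lie in $B$); by the first paragraph it therefore lies in $\Omega_-$ (an $\Omega_+$ site there would force $(l_b,b)\notin\mc{B}_o$), hence $(l_b-1,b-1)\notin\mc{B}_o$, and with the overlap this forces $l_b\le l_{b-1}$. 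So $b\mapsto l_b$ is non-increasing; if it is eventually constant $L_*$ then $\{(L_*-1,b):b\ge b_1\}\subset\Omega_-$, which has probability $0$. I expect the remaining case $l_b\to-\infty$ (and the symmetric analysis of $r_b$) to be the main obstacle: one should split according to whether $e_i\in\uE$ and use the $e_i$-step available at $\Omega_+$ sites (if $e_i\in\uE$) or at $\Omega_-$ sites (otherwise) to obtain a matching monotonicity pinning the slices of $B$ to a fixed interval translated along one line, and so again force a one-dimensional ray of sites all in $\Omega_+$ or all in $\Omega_-$. Completing this exclusion is the crux of the proof.

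\emph{Part (i).} By the first paragraph, $Z^i(z)\setminus\mc{B}_o$ has no finite component: a finite component would contain some $y\notin\mc{B}_o$ and hence an infinite self-avoiding path in $Z^i(z)\setminus\mc{B}_o$ starting at $y$, contradicting finiteness. In particular $\mc{A}$ has no bounded complementary ``hole''. Combining this with the previous paragraph, every component of $\mc{A}$ is either finite --- hence not separating $Z^i(z)$ --- or is all of $\mc{A}$ and ``half-plane-like'' in the sense that each of its horizontal slices is a non-empty half-line of one fixed orientation, whose complement is connected since the complementary half-lines chain together; the cases $\mc{A}=\varnothing$ and $\mc{A}=Z^i(z)$ are trivial. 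Hence $Z^i(z)\setminus\mc{B}_o$ is connected, and the proof reduces entirely to the excluded-configuration step isolated above.
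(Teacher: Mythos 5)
Your reduction proves the wrong interval statement and then stalls exactly where the paper's key idea is needed. Your excursion/induction argument via Lemma \ref{lem:segment} shows that $B$ meets each line $z'[\Z]$ in an interval, whereas the lemma asserts that the \emph{complement} $I(z')=z'[\Z]\setminus B$ is an interval; the two statements differ precisely in the case you cannot exclude, namely an infinite component all of whose line-slices are finite intervals. You candidly flag this as the crux, and indeed your sketched fix does not go through in general: when $e_i\notin\uE$ your endpoint analysis does close the case (the sites $(l_b-1,b)\in\Omega_-$ force $l_b$ both non-increasing and non-decreasing, hence a fixed column of $\Omega_-$ sites, probability $0$), but when $e_i\in\uE$ the same reasoning only gives that \emph{both} $l_b$ and $r_b$ are non-increasing, and the sites forced into $\Omega_-$ (left of the slice) and $\Omega_+$ (right of the slice) then lie along a random staircase drifting to $-\infty$, not along any fixed ray, so no probability-zero event on a deterministic set emerges. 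The paper avoids this entirely: it proves directly that $I(z')$ is an interval by taking two complement points $y_1,y_2$ flanking a run of $B$-points on the line, running from each of them a monotone path with steps $e_1$ (at $\Omega_+$ sites) and $-e_i$ (at $\Omega_-$ sites) and a second pair of monotone paths in the opposite quadrant (using $-e_1,e_i$ or $e_i,e_1$ according to whether $e_i\in\uE$), and invoking the planar crossing argument of Proposition 3.8 of \cite{DRE} to close these into a circuit in $Z^i(z)\setminus\mc{B}_o$ enclosing the intermediate points, which contradicts their membership in an \emph{infinite} component. Some version of that enclosure (or another genuinely new input) is what your missing case requires.

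There is a second gap, in your Part (i). From "the complement has no finite component" and your (incomplete) classification you infer connectedness of $Z^i(z)\setminus\mc{B}_o$ by saying that finite components of $\mc{A}$ "do not separate"; but a union of finite components can separate the plane even though no single one does — e.g.\ the diagonal $\{(k,-k):k\in\Z\}$ consists entirely of singleton components, its complement has no finite component, yet it disconnects $\Z^2$ into two infinite pieces. So even granting the classification, the all-finite-components case is not handled. The paper's proof of the first claim is again the path-crossing argument: from any two points of $Z^i(z)\setminus\mc{B}_o$ one builds the monotone $e_1/-e_i$ paths (which stay in the complement, as you correctly observe) and shows they \emph{meet}; it is this intersection step, not the absence of finite complementary components, that yields connectedness, and your proposal never supplies it.
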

\begin{proof}
For $y_1,y_2\in Z^i(z)\setminus \mc{B}_o$, we can follows paths consistent with the environment and consisting of only steps $e_1$ (from $\Omega_+$ sites) and $-e_i$ (from $\Omega_-$ sites) that eventually intersect (as in Proposition 3.8 of \cite{DRE}).  These paths lie entirely in $Z^i(z)\setminus \mc{B}_o$ since only moves $e_1,-e_i$ were used and  $y_1,y_2\in Z^i(z)\setminus \mc{B}_o$.  This proves the first claim.

For the second claim, suppose that $I$ is not an interval.  Then there exist $y_1,y_2\in I$ with $y_1^{\sss[1]}<y_2^{\sss[1]}-1$ and such that $v \in B$ for every $v \in z'[\Z]$ with $y_1^{\sss[1]}<v^{\sss[1]}<y_2^{\sss[1]}$.  Then $y_1,y_2\notin \mc{B}_o$, since they neighbour $B$ but are $\notin B$.  From $y_1$ and $y_2$ we may follow paths consistent with the environment using only $e_1$ and $-e_i$ moves from $\Omega_+$ sites and $\Omega_-$ sites respectively.  These paths eventually meet (again, as in Proposition 3.8 of \cite{DRE}) and are contained in $Z^i(z)\setminus \mc{B}_o$.  	Similarly, if $e_i\in \underline{E}$ then from $y_1$ and $y_2$ we may follow paths consistent with the environment using only $-e_1$ and $e_i$ moves, from $\Omega_-$ sites and $\Omega_+$ sites respectively.  If $e_i\notin \underline{E}$ then we may instead follow paths using only $e_i$ and $e_1$ moves,  from $\Omega_-$ sites and $\Omega_+$ sites respectively.  In either case the two paths intersect and are contained in $\mc{B}_o^c$.

It follows that each $v$ as above is enclosed by a circuit in $Z^i(z)\setminus \mc{B}_o$ and hence $v$ is not in an infinite component of $\mc{B}_o\cap Z^i(z)$, contradicting that $v\in B$.  This shows that $I$ is indeed an interval.

It remains only to prove that $I(z')\ne z'[\Z]$.  If $\mc{B}_o=\Z^d$ then this holds, since $I(z')=\emptyset$. So assume this is not the case.
Since $B$ is non-empty there is some $u\in \Z^d$ such that $u[\Z]\cap B$ is non-empty, and since $I(u)$ is an interval, $u[\Z]\cap B$ must contain a half line. Without loss of generality it is $y[\Z_-]$ for some $y\in u[\Z]$.  Because $\mc{B}_o\neq\Z^d$, Lemma \ref{lem:Zline} implies that $y[\Z]\not\subset B_o$.  So by Lemma \ref{lem:goat1}, for every $x\in Z^i(z)$ there is a $K_x\in \Z$ such that $\mc{B}_o \cap x[\Z]=(x+K_xe_1)[\Z_-]$.  But $\bigcup_{x\in Z^i(z)}(x+K_xe_1)[\Z_-]$ is connected in $Z^i(z)$ and intersects $B$ (just take $x=u$), so in fact it is equal to $B$. 
Thus $B$ intersects $z'[\Z]$, so $I(z')\ne z'[\Z]$.
\end{proof}

\begin{proof}[\textbf{Proof of Proposition \ref{formofBo}}]
If $v[\Z]\subset \mc{B}_o$ for some $v\in \Z^d$ then $\mc{B}_o=\Z^d$ by Lemma \ref{lem:Zline}, and we are in case (ii). So assume this is not the case.

If all components of $\mathcal{B}_o^{i}(z)$ are finite for all $i$ and $z$, then $\mc{B}_o$ is semi-finite, and we are in case (i).

Otherwise, for some $i\ne 1$ and $z\in\mathbb{Z}^d$ the set $\mathcal{B}_o^{i}(z)$ has an infinite component $B$. Without loss of generality we assume $i=2$.
By Lemma \ref{lem:intervals}, for each $z'\in Z^2(z)$ we have that $I(z')=z'[\Z]\setminus B$ is an interval that is not bi-infinite, so there exists $z''\in \Z(z')$ such that either $z''[\Z_-]\subset B$ or  $z''[\Z_+]\subset B$.  

In the second case, since $z''[\Z_+]\subset B\subset \mc{B}_o$ but $z''[\Z]\not \subset \mc{B}_o$, by Lemma \ref{lem:goat2} we have that each $\mc{B}_o\cap x[\Z]$ has the form $(x+K'_x)[\Z_+]$ for some $K'_x\in\Z$.  It follows that $\mc{B}_o=(\mc{B}_o)_{\{1+\}}$ and $\mc{B}_o^*=\Z^d$, so case (iii) of the Proposition holds.

In the first case, Lemma \ref{lem:goat1} implies that for every $x$, $\mc{B}_o\cap x[\Z]$ has the form $(x+K_x)[\Z_-]$ for each some $K_x\in\Z$.  Let $S=\{x+(K_x+1)e_1:x \in \Z^d\}$.  Since $-e_1 \notin \mc{G}_{x+(K_x+1)e_1}$ by definition of $K_x$ we have that (s1) and (s2) of Definition \ref{def:barrier} hold for $S$ (with $w(x):=K_x+1$).  Suppose that $w(y+e)>w(y)$, and let $J=[w(y),w(y+e))$.  Then $e\notin \cup_{j \in J}\mc{G}_{y+je_1}$, so either  $\{y+je_1:j \in J\}\subset \Omega_+$ (if $e\notin\underline{E}$) or $\{y+je_1:j \in J\}\subset \Omega_-$ (if $e\in\underline{E}$).  

The second alternative cannot occur, since if it did then $-e_1\in \mc{G}_{y+je_1}$ for each $j \in J$, so $\{y+je_1:j \in J\}\subset \mc{B}_o$ which contradicts the definition of $w(y)$.  Therefore $\{y+je_1:j \in J\}\subset \Omega_+$, so (s3) of Definition \ref{def:barrier} holds, i.e. $\bar{S}$ is an $(\bs{E},+)$ barrier with side function $w$.   It remains an  $(\bs{E},+)$ barrier with side function $w$ when we change all $\Omega_-$ sites to $\mc{E}$, so this proves that $\mc{B}_o=\mc{B}_o^*$ in this case.

\end{proof}

\begin{proof}[\textbf{Proof of Proposition \ref{prop:connectedness}}]
If $\mc{B}_o^c$ is non-empty then there exists $y\in \mc{B}_o^c$, and since $\mc{C}_y\subset \mc{B}_o^c$ we conclude that $\mc{B}_o^c$  is infinite.

To show connectedness, let $y_1,y_2\in\mc{B}_o^c$. We will construct self-avoiding paths from each, consistent with the environment, that eventually meet. By definition, both paths must lie in $\mc{B}_o^c$, which will establish the result. 

Without loss of generality, $y_1^{\sss[d]}\le y_2^{\sss[d]}$. Build a path from $y_2$ by following $e_1$ at sites in $\Omega_+$ and $-e_d$ at sites in $\Omega_-$ till we reach a point $y'_2\in\mc{B}_o^c$ whose $d$'th coordinate agrees with that of $y_1$. Let $y'_1=y_1$. Repeating the same argument, now starting from $y'_1$ and $y'_2$, we will in turn reach points whose $d$'th and $(d-1)$'st coordinates agree. Continuing in this way, we'll reach points $x_1, x_2\in\mc{B}_o^c$, all of whose coordinates agree, other than the first two. In the notation from before Proposition \ref{formofBo} we'll have that $x_1, x_2$ belong to the plane $Z^2(x_1)$. 

But from $x_1$ and $x_2$ we may now apply the planar construction of Proposition 3.8 of \cite{DRE} (also used above in the proof of Lemma \ref{lem:intervals}) to build paths in $Z^2(x_1)\cap\mc{B}_o^c$ that eventually cross. Thus $y_1$ and $y_2$ both connect to that crossing point.
\end{proof}

\subsection*{Acknowledgements}
The work of MH is supported by Future Fellowship FT160100166 from the Australian Research Council.  The work of TS is supported by NSERC and the Fields Institute.

\bibliographystyle{plain}

\end{document}